\newtheorem{thm}{Theorem}[section]
\newtheorem{lem}[thm]{Lemma}
\newtheorem{cor}[thm]{Corollary}
\newtheorem{mydef}[thm]{Definition}
\numberwithin{equation}{section}
\author{Michael Music}
\title{The nonlinear Fourier transform for two-dimensional subcritical potentials}
\begin{document}
\maketitle
\begin{abstract}
The inverse scattering method for the Novikov-Veselov equation is studied for a larger class of Schr\"odinger potentials than could be handled previously. Previous work concerns so-called conductivity type potentials, which have a bounded positive solution at zero energy and are a nowhere dense set of potentials. We relax this assumption to include logarithmically growing positive solutions at zero energy. These potentials are stable under perturbations. Assuming only that the potential is subcritical and has two weak derivatives in a weighted Sobolev space, we prove that the associated scattering transform can be inverted, and the original potential is recovered from the scattering data. 
\end{abstract}


\section{Introduction}
Given a potential $q\in L^p(\mathbb R^2)$ with $1<p<2$, we study the scattering transform for the two-dimensional Schr\"odinger equation. We recall Faddeev's complex geometric optics solutions at zero energy \cite{faddeev}, defined as the solution $\psi(x,k)$ of the equation
\begin{equation}
\begin{cases}
(-\Delta+q(\cdot ))\psi(\cdot,k)=0\\
e^{-ikx}\psi(x,k)-1\in W^{1,\tilde p}(\mathbb R^2).
\end{cases}
\label{eq:schro}
\end{equation} \noindent
Here $x\in \mathbb R^2$ is identified with the complex number $x=x_1+ix_2$, and $k\in \mathbb C\setminus \{0\}$ is a parameter. The exponent $ \tilde p$ is defined by $1/\tilde p=1/p-1/2$. Such a solution is not guaranteed to be well-defined. If the Schr\"odinger equation in \eqref{eq:schro} has a solution with $e^{-ikx}h(x)\in W^{1,\tilde p}(\mathbb R^2)$ we call $k$ an exceptional point. 

If $k$ is not an exceptional point and $q$ is integrable, the scattering transform $\mathcal T:q\to {\bf t}$ is given by
\begin{equation}
{\bf t}(k)=\int_{\mathbb R^2}e^{i\bar k \bar x} q(x)\psi(x,k)dx.
\label{eq:scat}
\end{equation}

We will show the scattering transform is well-defined for a certain class of potentials for every $k\in \mathbb C\setminus \{0\}$. We will then show that the renormalized solutions $\mu(x,k)=e^{-ikx}\psi(x,k)$ satisfy the $\bar\partial_k$ equation
\begin{equation}
\begin{cases}
\bar \partial_k \mu(x,k)=e_{-x}(k)\displaystyle\frac{{\bf t}(k)}{4\pi\bar k}\overline{\mu(x,k)}\\
\\
\mu(x,\cdot)-1 \in L^{r}(\mathbb C)
\label{eq:dbar-k-mu}
\end{cases}
\end{equation}
where we define
\[ \bar\partial_k=\frac{1}{2}\left(\frac{\partial}{\partial k_1}+i\frac{\partial}{\partial k_2}\right),\]
\[e_{-x}(k)=\exp(-i(kx+\bar k \bar x)).\]
and $p'<r<\infty$.
Theorem \ref{thm:tk-small} and large-$k$ estimates on ${\bf t}(k)$ imply that ${\bf t}(k)/(4\pi\bar k) \in L^2(\mathbb C)$. This allows us to use uniqueness theorems for equations of the form \eqref{eq:dbar-k-mu}, so $\mu(x,k)$ can be recovered from the the data ${\bf t}(k)$ by solving that equation. This equation and the large-$k$ limits of its solutions are the basis for recovering $q(x)$ via the inverse scattering transform defined by 
\begin{equation}
\mathcal Q{\bf t}(x)=\frac{4i}{\pi}\bar\partial_x\int_{\mathbb C}\frac{{\bf t}(k)}{4\pi\bar k}e_{-x}(k)\overline{\mu(x,k)}dk,
\label{eq:inv-trans}
\end{equation}
where $dk$ is the Lebesgue measure.

The main interest we have in the inverse scattering transform is eventually to use it to solve the (2+1) dimensional Novikov-Veselov equation:
\begin{align*}
q_t&=-\bar\partial^3_x q -\partial^3_x q+\frac{3}{4}\bar\partial_x(q\bar v)+\frac{3}{4}\partial_x(qv)\\
v&=\bar\partial^{-1}_x\partial_x q.
\end{align*}
Formally, 
\[q(x,t)=\mathcal Q (e^{it(k^3+\bar k^3)}\mathcal T\left[q(\cdot, 0)\right])(x,t).\]
The reader can find a review of the general methods in two-dimensional inverse scattering in Beals and Coiffman \cite{beals}.  The inverse scattering method for the Novikov-Veselov equation was formulated by Boiti, Leon, Manna, and Pempinelli in \cite{boitietal}. We would direct the reader to the article by Croke, Mueller, Music, Perry, Siltanen, and Stahel \cite{croke} for an overview and other references for the inverse scattering method in the Novikov-Veselov equation. 
We hope this paper's analysis of the two transforms leads to a proof showing that the inverse scattering solution is a classical solution to the Novikov-Veselov equation for subcritical potentials.

The inverse scattering method for solving the Novikov-Veselov equation was previously studied by Tsai \cite{tsai} using small data assumptions to recover $q$ from the scattering data. In \cite{tsai2}, he gave a formal derivation of the evolution equations associated with the Novikov-Veselov equation. A breakthrough in the area came from work on the inverse conductivity problem in 1996 by Nachman \cite{nachman}. A potential, $q$ is called ``conductivity type'' if $q=\gamma^{-1/2}\Delta \gamma^{1/2}$ where $\gamma\in L^\infty(\mathbb R^2)$ and $\gamma>c>0$. Define $L^p_\rho(\mathbb R^2)=\{f:\langle x\rangle^{\rho}f(x)\in L^p(\mathbb R^2)\}$. Nachman \cite[Theorem 3]{nachman} proved the following:

\begin{thm}[Nachman, 1996]
Let $q\in L^p_\rho(\mathbb R^2)$ for $1<p<2$ and $\rho>1$. The following are equivalent:
\begin{enumerate}
\item $q$ is conductivity type
\item the scattering transform has no exceptional points and $|t(k)|\leq c|k|^\epsilon$ for some $\epsilon>0$ and $k$ sufficiently small.
\end{enumerate}
\label{thm:nachman}
\end{thm}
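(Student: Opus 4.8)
The plan is to reduce the equivalence to the standard characterization of conductivity type potentials through a distinguished zero-energy solution. First I would record the elementary observation that $q=\gamma^{-1/2}\Delta\gamma^{1/2}$ with $\gamma\in L^\infty$, $\gamma>c>0$, and $\gamma\to 1$ at infinity holds if and only if the Schr\"odinger equation in \eqref{eq:schro} possesses a solution $\psi_0$ that is real, bounded above and below by positive constants, and satisfies $\psi_0-1\in W^{1,\tilde p}$. The forward implication takes $\psi_0=\gamma^{1/2}$ and checks $(-\Delta+q)\gamma^{1/2}=0$ directly; the reverse sets $\gamma=\psi_0^2$ and verifies $\psi_0^{-1}\Delta\psi_0=q$. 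The whole theorem then becomes the statement that such a $\psi_0$ exists if and only if there are no exceptional points and $|{\bf t}(k)|\le c|k|^\epsilon$ for $k$ small.

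For the implication (1) $\Rightarrow$ (2), given conductivity type I would transfer \eqref{eq:schro} to the conductivity equation $\nabla\cdot(\gamma\nabla u)=0$ via the Liouville substitution $u=\gamma^{-1/2}\psi$, under which $\psi_0=\gamma^{1/2}$ corresponds to the constant solution. The exponentially growing solutions of the conductivity equation exist for every $k$ by the positivity of $\gamma$ and the maximum principle, and this absence of exceptional points transfers back to \eqref{eq:schro}. To obtain the small-$k$ decay of ${\bf t}(k)$ I would analyze $\mu(x,k)$ as $k\to 0$: the existence of the bounded positive limit $\psi_0=\gamma^{1/2}$ forces ${\bf t}(0)=0$, and H\"older estimates on the $k$-dependence of the complex geometric optics solutions, available because $q\in L^p_\rho$, upgrade this to the quantitative rate $|{\bf t}(k)|\le c|k|^\epsilon$.

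For (2) $\Rightarrow$ (1), the hypothesis that there are no exceptional points gives $\mu(x,k)$ for every $k\ne 0$, and the bound $|{\bf t}(k)|\le c|k|^\epsilon$ makes the right-hand side $e_{-x}(k){\bf t}(k)/(4\pi\bar k)=O(|k|^{\epsilon-1})$ of \eqref{eq:dbar-k-mu} locally integrable near $k=0$, so $\mu(x,k)$ extends continuously there and $\psi_0(x):=\mu(x,0)$ solves $(-\Delta+q)\psi_0=0$ with $\psi_0-1$ decaying. Reality of $\psi_0$ follows from the conjugation symmetry of \eqref{eq:schro}: since $q$ is real, $\overline{\psi_0}$ solves the same problem, and uniqueness forces $\psi_0=\overline{\psi_0}$. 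For non-vanishing I would fix $x$ and view $k\mapsto\mu(x,k)$ as a solution of the Vekua-type equation $\bar\partial_k\mu=B\overline{\mu}$ with $B=e_{-x}(k){\bf t}(k)/(4\pi\bar k)$; the bound on ${\bf t}$ places $B$ in $L^p_{\mathrm{loc}}$ for some $p>2$ even across $k=0$, so the similarity principle represents $\mu$ locally as $e^{s}$ times a holomorphic function, whence its zeros are isolated with positive index. Because $\mu\to 1$ as $|k|\to\infty$, the argument principle forces $\mu(x,\cdot)$ to be zero-free, so $\psi_0(x)=\mu(x,0)\ne 0$ for every $x$. Since $\psi_0$ is real, continuous, non-vanishing, and tends to $1$ as $|x|\to\infty$, the intermediate value theorem gives $\psi_0>0$ on $\mathbb R^2$, and the weighted estimates supply the uniform two-sided bounds; setting $\gamma=\psi_0^2$ exhibits $q$ as conductivity type.

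The main obstacle is the positivity and two-sided boundedness of $\psi_0$ in the direction (2) $\Rightarrow$ (1). The pseudoanalytic non-vanishing argument is delicate because it requires $B$ to lie in a space ($L^p_{\mathrm{loc}}$ with $p>2$, together with global control) in which both the similarity principle and the winding-number computation at infinity are valid; keeping $B$ integrable and the representation $\mu=e^s f$ intact \emph{uniformly across} $k=0$ is exactly the step where the hypothesis $|{\bf t}(k)|\le c|k|^\epsilon$ is indispensable, since without a strictly positive $\epsilon$ one loses the margin $p>2$. A secondary difficulty, in (1) $\Rightarrow$ (2), is extracting the quantitative H\"older rate $|k|^\epsilon$ rather than mere vanishing ${\bf t}(0)=0$, which requires tracking the $k$-regularity of the complex geometric optics solutions carefully through the weighted $L^p_\rho$ estimates.
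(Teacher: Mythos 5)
This statement is not proved in the paper at all: it is background, attributed to Nachman \cite[Theorem 3]{nachman}, and the only indication the paper gives of its proof is the remark that Nachman establishes the absence of exceptional points by using the bounded positive solution $\psi_0=\gamma^{1/2}$ to factor the Schr\"odinger operator into first-order operators and then applying a Liouville theorem for pseudoanalytic functions --- precisely the method the paper generalizes in Section 3. Judged against Nachman's actual argument, your direction (2) $\Rightarrow$ (1) reproduces the correct outline: continuous extension of $\mu(x,\cdot)$ across $k=0$ using the fact that the $|k|^\epsilon$ bound places ${\bf t}(k)/(4\pi\bar k)$ in $L^p_{\mathrm{loc}}$ for some $p>2$, reality via the conjugation symmetry, non-vanishing via the similarity principle and a winding-number argument, and finally $\gamma=\psi_0^2$.

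The genuine gap is in (1) $\Rightarrow$ (2), at the step claiming that exceptional points are excluded ``by the positivity of $\gamma$ and the maximum principle.'' This is the wrong tool and the step fails as stated. An exceptional point is a $k$ for which the homogeneous problem has a nontrivial solution $h$ with $e^{-ikx}h\in W^{1,\tilde p}(\mathbb R^2)$; ruling these out is a uniqueness (Liouville-type) statement about complex-valued functions with oscillatory exponential behavior, to which the maximum principle for $\nabla\cdot(\gamma\nabla u)=0$ does not apply. (Existence of the CGO solutions, which is what positivity plus Fredholm theory would address, is not the issue; triviality of the kernel is, and by the Fredholm alternative that is exactly what must be proved.) The correct mechanism --- Nachman's, and the one this paper adapts to subcritical potentials in Theorem \ref{thm:no-exc} --- is to form the Wronskian-type quantity $v=(\psi_0\partial h-h\partial\psi_0)e^{-ikx}$, check that it satisfies the Vekua equation \eqref{eq:v-vekua} whose coefficients $\bar\partial\psi_0/\psi_0$ and $\partial\psi_0/\psi_0$ lie in $L^2(\mathbb R^2)$ (this is where conductivity type, i.e.\ $\psi_0$ bounded above and below with square-integrable gradient, enters; Lemma \ref{lem:pos-largex} is the subcritical analogue), and then invoke the Brown--Uhlmann Liouville theorem (Corollary \ref{cor:bu-liouville}) to get $v\equiv 0$, whence $h/\psi_0$ is analytic and decaying, hence zero. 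Without this your first direction is unproven. A secondary weakness on the (2) $\Rightarrow$ (1) side: continuity of $\mu(x,\cdot)$ at $k=0$, and the fact that $\psi_0=\mu(\cdot,0)$ solves $(-\Delta+q)\psi_0=0$ with $\psi_0-1$ decaying and with \emph{uniform} two-sided bounds in $x$, do not follow from local integrability of the coefficient alone; they require small-$k$ operator estimates controlling $\mu$ jointly in $x$ and $k$, i.e.\ the analogue of \eqref{eq:small-operator} and \eqref{eq:muconv}.
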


In the work of Lassas, Mueller, and Siltanen \cite{lms}, this result was used to attack the inverse scattering problem without small data assumptions  They prove regularity results for the direct and inverse scattering transforms, and they prove the identity $\mathcal Q[\mathcal T q]=q$ for a class of conductivity type potentials. Later, Lassas, Mueller, Siltanen, and Stahel \cite{lmss} study the inverse scattering evolution of radially symmetric, compactly supported, smooth initial data. In particular, they show that the evolution is well-defined and preserves conductivity type. Using the Miura map, Perry \cite{perry} proved that the inverse scattering method yields global solutions to the Novikov-Veselov equation for conductivity type initial data.

We are interested in extending the result of Nachman to a larger class of Schr\"odinger potentials called ``subcritical'' by Murata in \cite{murata}. Murata groups the set of all $L^p_{\mathrm{loc}}(\mathbb R^2)$ potentials into three categories:
\begin{mydef} 
A potential $q\in L^p_{\mathrm{loc}}(\mathbb R^2)$ is
\begin{itemize}
\item[(i)] \emph{subcritical} if there is a positive Green's function for the operator $-\Delta+q$,
\item[(ii)]\emph{critical} if there is no positive Green's function but $-\Delta+q\geq 0$ , and
\item[(iii)] \emph{supercritical} if $-\Delta+q\ngeq 0$.
\end{itemize}
\label{def:murata}
\end{mydef}
 For $q\in L^p_\rho(\mathbb R^2)$ with $1<p<2$, $\rho>2/p'$ where $p'$ is the conjugate exponent $1=1/p+1/p'$, Murata \cite[Theorem 5.6]{murata} shows that critical potentials are equivalent to the conductivity type potentials of Nachman.

There are two theorems from Murata's paper that help us understand the space of subcritical potentials.  Theorem 2.5 from Murata's paper says that if $w\in L^p_{\mathrm{loc}}(\mathbb R^2)$ is a nonnegative function not identically zero and $q$ is critical, then $q+w$ is subcritical. If instead $q$ is subcritical, then $q+w$ is also subcritical. This result tells us that the set of critical potentials is small compared to the set of subcritical potentials. Theorem 5.6 of the same paper proves that if $q\in L^p_\rho(\mathbb R^2)$ is subcritical, that there is a unique positive solution $\psi_0$ to the Schr\"odinger equation and the solution has large-$x$ asymptotics
\begin{equation}
\psi_0(x)=a\log|x|+ O(1)
\label{eq:subcrit-asymp}
\end{equation}
where $a>0$. The difference between the critical and subcritical cases then comes down to whether there is a bounded positive solution or a positive solution with logarithmic growth.

To prove the absence of exceptional points for critical potentials, Nachman uses the bounded positive solution to decompose the Schr\"odinger operator into first-order operators. A Liouville theorem for pseudoanalytic functions shows that there are no exceptional points. We do the same in this paper to prove the following:
\begin{thm}
A subcritical potential $q\in L^p_\rho(\mathbb R^2)$ with $1<p<2$ and $\rho>2/p'$ has no exceptional points.
\label{thm:no-exc}
\end{thm}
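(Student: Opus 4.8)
The plan is to argue by contradiction, using the ground-state transform associated to Murata's positive solution. Suppose some $k\in\mathbb C\setminus\{0\}$ were exceptional, so that there is a nontrivial $h$ with $(-\Delta+q)h=0$ and $\eta:=e^{-ikx}h\in W^{1,\tilde p}(\mathbb R^2)$. Since $1<p<2$ forces $\tilde p>2$, Morrey's embedding gives $\eta\in C^{0,\alpha}$ with $\eta(x)\to 0$ as $|x|\to\infty$ in every direction. Because $\rho>2/p'$, Murata's Theorem 5.6 supplies a positive solution $\psi_0$ of $(-\Delta+q)\psi_0=0$ with $\psi_0(x)=a\log|x|+O(1)$, $a>0$; in particular $q=\psi_0^{-1}\Delta\psi_0$ and the Schr\"odinger operator factors as $-\Delta+q=-\psi_0^{-1}\,\mathrm{div}\big(\psi_0^{2}\,\nabla(\psi_0^{-1}\,\cdot\,)\big)$.

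First I would convert the Schr\"odinger equation into a first-order pseudoanalytic system. Writing $\alpha:=\partial_x\log\psi_0$, a direct computation using $(-\Delta+q)\psi_0=(-\Delta+q)h=0$ and $\Delta=4\partial_x\bar\partial_x$ shows that
\[
g:=\psi_0\,\partial_x(h/\psi_0)=\partial_x h-\alpha h,\qquad \tilde g:=\psi_0\,\bar\partial_x(h/\psi_0)=\bar\partial_x h-\bar\alpha h
\]
satisfy the coupled system $\bar\partial_x g=-\alpha\,\tilde g$ and $\partial_x\tilde g=-\bar\alpha\,g$. Setting $P:=g+\overline{\tilde g}$ and $M:=g-\overline{\tilde g}$ decouples this into two scalar generalized Cauchy--Riemann (Bers--Vekua) equations
\[
\bar\partial_x P=-\alpha\,\overline P,\qquad \bar\partial_x M=\alpha\,\overline M,
\]
so $P$ and $M$ are pseudoanalytic functions whose single coefficient is $\alpha$.

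The place where subcriticality enters, and the crux of the argument, is the size of $\alpha$. From the asymptotics $\psi_0=a\log|x|+O(1)$ one computes $\alpha=\partial_x\psi_0/\psi_0=O\big((|x|\log|x|)^{-1}\big)$ at infinity, while interior elliptic regularity for $q\in L^p_\rho$ together with $\psi_0>0$ controls $\alpha$ locally; hence $\alpha\in L^2(\mathbb C)\cap L^{p'}(\mathbb C)$ with $p'>2$. This is exactly the integrability the similarity principle for generalized analytic functions needs: one writes $P=e^{s}\Phi$ and $M=e^{s'}\Phi'$ with $s,s'$ bounded, continuous and vanishing at infinity, and $\Phi,\Phi'$ entire. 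In Nachman's critical case $\psi_0$ is bounded above and below and $\alpha$ is controlled trivially; here the positive solution is unbounded, and the content of this step is that logarithmic growth is slow enough to keep $\alpha$ in the Lebesgue spaces on which the similarity principle and the ensuing Liouville theorem rest.

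Finally I would run the Liouville step. Transferring the similarity representations through the oscillating factor $e^{ikx}$ and using that $e^{-ikx}h=\eta\in W^{1,\tilde p}$ makes the renormalized quantities $e^{-ikx}g$ and $e^{-ikx}\tilde g$ decay at infinity, one is led to conclude that the entire factors $\Phi,\Phi'$ vanish, whence $g\equiv\tilde g\equiv 0$. Then $\partial_x(h/\psi_0)=\bar\partial_x(h/\psi_0)=0$, so $h/\psi_0$ is constant; since $e^{-ikx}\psi_0$ is unbounded while $e^{-ikx}h$ vanishes at infinity, that constant must be $0$, and $h\equiv 0$, contradicting exceptionality. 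I expect the main obstacle to lie precisely in this last step: because $\psi_0$ is unbounded the weight $\psi_0^{2}$ degenerates the associated divergence-form equation at infinity, so unlike in Nachman's uniformly elliptic situation one must show that the degeneration is mild enough—quantitatively, that the logarithmic rate keeps $\alpha$ in $L^2\cap L^{p'}$ and $s,s'$ bounded—for the pseudoanalytic Liouville theorem to apply despite the exponential factor $e^{ikx}$ carried by $g$ and $\tilde g$.
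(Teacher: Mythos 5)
Your overall skeleton---use Murata's positive solution $\psi_0$ as a ground state, reduce the Schr\"odinger equation to pseudoanalytic functions with coefficient $\alpha=\partial\psi_0/\psi_0$, and kill them with a Liouville-type theorem---is exactly the paper's (and Nachman's) strategy, and your algebraic decoupling into $\bar\partial_x P=-\alpha\overline{P}$, $\bar\partial_x M=\alpha\overline{M}$ is correct. However, the two analytic steps your argument rests on both fail as stated. First, you get $\alpha\in L^2\cap L^{p'}$ by ``computing'' $\alpha=O\bigl((|x|\log|x|)^{-1}\bigr)$ from $\psi_0=a\log|x|+O(1)$. Asymptotics cannot be differentiated: for $q\in L^p_\rho$ there is no pointwise control of $\nabla\psi_0$ at all, and this integrability claim is where the real work lies. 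In the paper it is Lemma \ref{lem:pos-largex}: one first shows the entire summand in $\partial\psi_0=f+\tfrac14\bar\partial^{-1}(q\psi_0)$ vanishes by a growth argument against bump functions, and then proves $\partial\psi_0/\log(|x|+e)\in L^2$ by a three-region estimate on $\bar\partial^{-1}(q\psi_0)$; combined with $\psi_0\gtrsim\log(|x|+e)$ this yields $\alpha\in L^2$---and nothing stronger.

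Second, and more seriously, the similarity principle in the form you invoke---$P=e^{s}\Phi$ with $s$ \emph{bounded}, continuous, vanishing at infinity and $\Phi$ entire---requires the Vekua coefficient to lie in $L^{q_1}\cap L^{q_2}$ with $q_1<2<q_2$ (this is part 3 of Lemma \ref{lem:dbar-estimates}). Your coefficient is at best in $L^2$; indeed even the decay $|\alpha|\lesssim(|x|\log|x|)^{-1}$ you posit fails to put $\alpha$ in \emph{any} $L^{q_1}$ with $q_1<2$, so $s=\bar\partial^{-1}\bigl(\alpha\overline{P}/P\bigr)$ is only a BMO function, not bounded, and the Liouville step collapses. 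This is precisely why the Brown--Uhlmann theorem (Corollary \ref{cor:bu-liouville}), which handles $L^2$ coefficients through John--Nirenberg-type estimates on $\exp(\bar\partial^{-1}f)$, is the unavoidable tool; and because the paper's auxiliary function $v=(\psi_0\partial h-h\partial\psi_0)e^{-ikx}$ inherits the logarithmic growth of $\psi_0$, the paper must additionally \emph{extend} that theorem to weighted spaces $L^p_{-\rho}$ (Lemma \ref{lem:br-uhl-ext} and Corollary \ref{cor:liouville}). Your variant, which divides by $\psi_0$ rather than multiplying, keeps the pseudoanalytic functions in $L^{\tilde p}+L^2$, and since $L^{\tilde p}+L^2\subset L^2_{-\rho}$ for $1-2/\tilde p<\rho<1$ the paper's Corollary \ref{cor:liouville} would apply to it; but some Brown--Uhlmann-type result is needed in any case---the classical similarity principle is not strong enough. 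Finally, the interaction of the oscillating factor $e^{ikx}$ with the Liouville argument, which you correctly flag as the main obstacle, is left unresolved in your sketch; in the paper it enters only as the unimodular factor $e_{-x}(k)$ multiplying one $L^2$ coefficient in equation \eqref{eq:v-vekua}, which is harmless.
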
 \noindent
In order to do this, we prove a new Liouville theorem in Lemma \ref{lem:br-uhl-ext} which is a modified version of Theorem 3.1 from Brown and Uhlmann \cite{brown_uhlmann}.

We know from Nachman \cite[Theorem 3]{nachman} that the only potentials which have no exceptional points and with scattering transforms that behave like $c|k|^{\epsilon}$ for small $k$ are critical. Therefore, the scattering transform for subcritical potentials must have more singular behavior . In fact, we prove:
\begin{thm}
For a subcritical potential $q\in L^p_\rho(\mathbb R^2)$ for $1<p<2$ and $\rho>1$, the scattering transform satisfies
\begin{equation}
{\bf t}(k)=\frac{2\pi a}{c_\infty-a(\log|k|+\gamma)} +O(|k|^\epsilon)
\label{eq:tk-small}
\end{equation}
where $a>0$, $c_\infty \in \mathbb R$, and $|k|$ sufficiently small.
\label{thm:tk-small}
\end{thm}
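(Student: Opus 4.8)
The plan is to analyze the small-$k$ behavior of $\mathbf{t}(k)$ by connecting the complex geometric optics solution $\psi(x,k)$ to the unique positive solution $\psi_0(x)$ from Murata's Theorem 5.6, whose asymptotics $\psi_0(x)=a\log|x|+O(1)$ provide the source of the logarithmic term in the denominator of \eqref{eq:tk-small}. The key heuristic is that as $k\to 0$, the renormalized solution $\mu(x,k)=e^{-ikx}\psi(x,k)$ should converge (in an appropriate sense) to a multiple of $\psi_0$, and the mismatch between the normalization $\mu-1\in W^{1,\tilde p}$ and the logarithmic growth of $\psi_0$ is precisely what forces the singular $1/\log|k|$ behavior rather than the $|k|^\epsilon$ decay seen in the critical (conductivity-type) case.

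First I would set up the integral-equation framework: using Theorem \ref{thm:no-exc}, the absence of exceptional points guarantees $\psi(x,k)$ is well-defined for all $k\in\mathbb{C}\setminus\{0\}$, so I can expand $\psi$ via the Faddeev Green's function $g_k$ and track its dependence on $k$ as $k\to 0$. The Faddeev Green's function degenerates logarithmically at $k=0$ — concretely $g_k(x)\sim \frac{1}{2\pi}(\log|k|+\gamma)+\frac{1}{2\pi}\log|x|+\cdots$ with $\gamma$ the Euler constant — and this is where the constants $\gamma$ and $\log|k|$ in \eqref{eq:tk-small} enter. I would substitute this expansion into \eqref{eq:scat} and isolate the leading contribution.

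Next I would identify the constant $c_\infty$. Writing $\psi(x,k)=\alpha(k)\psi_0(x)+(\text{correction})$ for some scalar $\alpha(k)$ determined by matching the large-$x$ normalization against $\psi_0=a\log|x|+O(1)$, I would compute $\int q\,\psi_0\,dx$. Since $-\Delta\psi_0 = -q\psi_0$, an integration by parts (Green's identity on a large disk, using the asymptotics of $\psi_0$ and $\nabla\psi_0$ on the boundary) should evaluate $\int q\psi_0\,dx$ in terms of the coefficient $a$, effectively yielding $\int q\psi_0 = 2\pi a$ up to the correction that defines $c_\infty$ as the $O(1)$ part of $\psi_0$ at infinity. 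Combining the logarithmic divergence of $g_k$ with this self-consistency relation produces an algebraic equation for the leading coefficient whose solution is exactly $\frac{2\pi a}{c_\infty - a(\log|k|+\gamma)}$. The remainder would be estimated as $O(|k|^\epsilon)$ using the weighted $L^p_\rho$ hypothesis with $\rho>1$ to control the higher-order terms in the Green's-function expansion and the Neumann series for $\mu-1$.

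The main obstacle I anticipate is making the limit $\mu(\cdot,k)\to c\,\psi_0$ rigorous and quantitative, since $\psi_0$ grows logarithmically and hence does not lie in the function space $1+W^{1,\tilde p}$ where $\mu$ is normalized; one cannot simply take $k\to 0$ in the integral equation without the Faddeev Green's function itself blowing up. The resolution must exploit a cancellation between the divergent constant part of $g_k$ and the divergent coefficient $\alpha(k)\to\infty$, so the real work is tracking these two divergences simultaneously and extracting the finite ratio. Carefully justifying the uniformity of the error estimates as $k\to 0$ — in particular showing the subleading terms genuinely decay like $|k|^\epsilon$ and do not contribute additional logarithms — is the delicate analytic heart of the argument, and is where the weighted Sobolev regularity assumptions on $q$ will be used most heavily.
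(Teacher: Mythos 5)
Your heuristic skeleton does match the paper's: the logarithmic degeneration of Faddeev's Green's function, the convergence of $\mu(\cdot,k)$ to a multiple of $\psi_0$, the identity $\int q\psi_0 = 2\pi a$, and an algebraic self-consistency relation producing the ratio $2\pi a/(c_\infty - a(\log|k|+\gamma))$ are exactly the ingredients used. But there are two genuine gaps. First, you propose to control the remainder via ``the Neumann series for $\mu-1$,'' and no Neumann series converges here: there is no smallness assumption on $q$, so the operator $f\mapsto g_k\ast(qf)$ (or its weighted renormalization $\tilde K(k)$) is not a contraction --- going beyond small-data hypotheses is the whole point of the theorem. The paper's substitute, and the technical heart of its proof, is a Fredholm argument: it proves $I+\tilde K(0)$ is injective, hence invertible, where $\tilde K(0)f=\langle x\rangle^{-\beta}G_0\ast(\langle\cdot\rangle^{\beta}qf)$. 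Injectivity is established by taking $h$ in the kernel, forming the auxiliary function $v=h\partial\psi_0-\psi_0\partial h$, and applying the new weighted Liouville theorem (Corollary \ref{cor:liouville}, which in turn needs Lemma \ref{lem:pos-largex}); this forces $h=c\psi_0$ and then $cc_\infty=0$. Invertibility of $I+\tilde K(k)$ for small $k$ follows by perturbation from \eqref{eq:small-operator}, and the exact resolvent identity $\mu=(1+\ell(k)\tau(k))\tilde\mu$, hence $\tau(k)=\tilde\tau(k)/(1-\ell(k)\tilde\tau(k))$, is what replaces your informal ``matching of two divergences.'' Without this uniqueness/Fredholm step you have no way to define the regularized solution $\tilde\mu$ or to prove the quantitative rate $\|\tilde\mu(\cdot,k)-\psi_0/c_\infty\|_{W^{1,\tilde p}_{-\beta}}\leq c|k|^{\epsilon}$ on which the error term in \eqref{eq:tk-small} rests.

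Second, your argument never addresses the case $c_\infty=0$ in the relation $\psi_0=c_\infty-G_0\ast(q\psi_0)$ of \eqref{eq:cinfty}, and in that case any inversion-based matching genuinely fails: when $c_\infty=0$ the function $\langle x\rangle^{-\beta}\psi_0$ itself lies in the kernel of $I+\tilde K(0)$ (since then $G_0\ast(q\psi_0)=-\psi_0$), so the limiting operator is not invertible and the convergence $\mu(\cdot,k)\to\psi_0/c_\infty$ is meaningless. The paper disposes of this case with a separate scaling argument (Lemma \ref{lem:scale}): replacing $q$ by $q_r(x)=r^2q(rx)$ shifts $c_\infty^r=c_\infty+a\log|r|$ while ${\bf t}_r(k)={\bf t}(k/r)$, so one can always scale into the generic case $c_\infty^r\neq 0$ and transfer the asymptotics back, which yields the denominator $-a(\log|k|+\gamma)$ when $c_\infty=0$. (A small further point: your expansion of $g_k$ carries the wrong sign; with the paper's conventions $g_k(x)\approx -\frac{1}{2\pi}\log|x|-\frac{1}{2\pi}(\log|k|+\gamma)$, since $\tilde g_k=g_k+\ell(k)$ is close to $G_0(x)=-\log|x|/2\pi$.)
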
 \noindent
The $a$ in this theorem is the same as that in equation \eqref{eq:subcrit-asymp}, and the $c_\infty$ is another constant related to $\psi_0$. This result was proved for a small class of radial potentials in \cite{exceptional_circle} and for point potentials in \cite{grinevich-novikov}.

We will then prove that the scattering transform and the inverse scattering transform are true inverses of each other on the space of subcritical potentials.  Lassas, Mueller, and Siltanen \cite{lms} proved $\mathcal Q\mathcal [T q](x)=q(x)$ for conductivity type potentials of the form $q=\gamma^{-1/2}\Delta \gamma^{1/2}$ where $\gamma-1\in C^{\infty}_0(\mathbb R^2)$. We weaken the regularity required and extend their result to subcritical potentials to prove:
\begin{thm}
For a critical or subcritical potential $q\in W^{2,p}_\rho(\mathbb R^2)$ with $1<p<2$ and $\rho>1$, we have 
\[\mathcal Q[\mathcal T q](x)=q(x).\]
\label{thm:inv-scat}
\end{thm}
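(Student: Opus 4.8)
The plan is to exploit the fact that the inverse transform \eqref{eq:inv-trans} is engineered to telescope against the $\bar\partial_k$ equation \eqref{eq:dbar-k-mu}. Substituting \eqref{eq:dbar-k-mu} into the integrand of \eqref{eq:inv-trans} turns it precisely into $\bar\partial_k\mu(x,k)$, so that
\[ \mathcal Q[\mathcal T q](x)=\frac{4i}{\pi}\,\bar\partial_x\int_{\mathbb C}\bar\partial_k\mu(x,k)\,dk. \]
The $k$-integral of $\bar\partial_k\mu$ is then evaluated by the Cauchy--Pompeiu formula, which extracts the coefficient of $1/k$ in the large-$k$ expansion of $\mu$; and the Schr\"odinger equation \eqref{eq:schro}, rewritten for $\mu$, identifies that coefficient with $q$ after one application of $\bar\partial_x$. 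Before any of this, one must check that the $\mu$ appearing in \eqref{eq:inv-trans} --- which is reconstructed from $\mathbf t$ by solving \eqref{eq:dbar-k-mu} --- coincides with the forward complex geometric optics solution. This is where Theorem \ref{thm:no-exc} and the $L^2$ membership of $\mathbf t(k)/(4\pi\bar k)$ enter: the former guarantees $\mathbf t$ and $\mu$ are well-defined for every $k\neq 0$, and the latter lets us invoke the uniqueness theory for equations of type \eqref{eq:dbar-k-mu}, so the reconstructed $\mu$ is the genuine CGO solution with all its known properties.

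First I would record the two pointwise identities on which the whole computation rests. Writing $\psi=e^{ikx}\mu$ and using $\partial_x e^{ikx}=ik\,e^{ikx}$ and $\bar\partial_x e^{ikx}=0$, the equation $(-\Delta+q)\psi=0$ becomes
\[ 4\,\bar\partial_x(\partial_x+ik)\mu = q\mu. \]
Inserting the large-$k$ expansion $\mu(x,k)=1+\mu_1(x)/k+o(1/k)$ and matching the $k^0$ terms yields the reconstruction identity $q=4i\,\bar\partial_x\mu_1$. Separately, since $\mu(x,\cdot)-1\in L^{r}(\mathbb C)$ decays at infinity, the Cauchy--Pompeiu formula applied in the $k$ variable gives $\int_{\mathbb C}\bar\partial_k\mu(x,k)\,dk=\pi\mu_1(x)$. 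Combining the two,
\[ \mathcal Q[\mathcal T q](x)=\frac{4i}{\pi}\,\bar\partial_x\bigl(\pi\mu_1(x)\bigr)=4i\,\bar\partial_x\mu_1(x)=q(x), \]
which is the claim.

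To make this rigorous I would rely on the forward analysis that furnishes \eqref{eq:dbar-k-mu}, and then concentrate on two points: establishing the large-$k$ asymptotic $\mu=1+\mu_1/k+o(1/k)$ with enough uniformity in $x$ and with $\mu_1$ in an appropriate weighted space, via the Neumann series for the Faddeev Green's function, where the hypothesis $q\in W^{2,p}_\rho$ supplies the decay and smoothness needed to carry the expansion to the relevant order and to differentiate it in $x$; and justifying the interchange of $\bar\partial_x$ with the $dk$-integral together with the passage to the $|k|\to\infty$ limit.

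I expect the principal obstacle to be precisely this asymptotic analysis and the convergence of the reconstruction integral, under the weakened regularity $W^{2,p}_\rho$ in place of the $C_0^\infty$-type data used by Lassas, Mueller, and Siltanen \cite{lms}. Two features of the subcritical case make it delicate. The small-$k$ behavior of $\mathbf t$ is genuinely singular --- from Theorem \ref{thm:tk-small}, $\mathbf t(k)\sim 2\pi a/(c_\infty-a(\log|k|+\gamma))$ --- so although $\mathbf t(k)/(4\pi\bar k)$ remains square-integrable near $k=0$, the induced small-$k$ behavior of $\mu(x,k)$ must be controlled carefully to guarantee that the integral defining $\mathcal Q[\mathcal T q]$ converges and that no spurious contribution survives in the $\bar\partial_x$-limit. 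Correspondingly, the large-$k$ expansion and its term-by-term differentiation in $x$ must be shown to hold in a sense strong enough that $4i\,\bar\partial_x\mu_1=q$ is an equality of functions in $L^p$, not merely a formal matching of asymptotic coefficients. Once these estimates are in place, the telescoping computation above closes the argument.
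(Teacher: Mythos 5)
Your proposal is correct and follows essentially the same route as the paper: identify the reconstructed $\mu$ with the forward CGO solution, extract the first coefficient of the large-$k$ expansion of $\mu$ from the $\bar\partial_k$ equation (your Cauchy--Pompeiu step is exactly the paper's geometric-series expansion of the Cauchy kernel in Lemma \ref{lem:large-mu-exp}, so your $\mu_1$ is the paper's $a_1$), and then match orders in equation \eqref{eq:dbar-z} using $\bar\partial\partial\mu=o(1)$ to conclude $q=4i\,\bar\partial_x a_1=\mathcal Q[\mathcal T q]$. The technical obstacles you flag --- the large-$k$ expansion with $x$-differentiability under $W^{2,p}_\rho$ regularity and the singular small-$k$ behavior of $\mathbf t$ --- are precisely what the paper's Lemmas \ref{lem:mu-diff-forward}, \ref{lem:sk-decay}, \ref{lem:mu-diff}, and \ref{lem:large-mu-exp} are built to handle.
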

For the potentials Lassas, Mueller, and Siltanen consider, ${\bf t}(k)$ is Schwartz class. They use this regularity in their proofs, but because of the small-$k$ behavior of our scattering transform we will need to use different methods. We prove in Lemma \ref{lem:sk-decay} that ${\bf t}(k)$ will have more decay as $|k|\to \infty$ when the potential is regular. The invertibility of the scattering transform then follows from analyzing the large-$k$ behavior of $\mu(x,k)$. 

The paper is organized as follows. In section 2, we introduce some previous known results that will be used throughout the paper. Section 3 proves Theorem \ref{thm:no-exc}, the absence of exceptional points for subcritical potentials. In section 4, we prove Theorem \ref{thm:tk-small}, the behavior of ${\bf t}(k)$ for small $k$. The final section is devoted to the large-$k$ behavior of ${\bf t}(k)$ and a study of the inverse problem \eqref{eq:dbar-k-mu}. After deriving a large-$k$ expansion of $\mu(x,k)$ in Lemma \ref{lem:large-mu-exp}, we use it to prove Theorem \ref{thm:inv-scat}.

\section{Preliminaries}
Here, we collect some of the results that are already known which we will be using throughout the remainder of the paper. The following Liouville-type theorem from Brown and Uhlmann \cite[Corollary~3.11]{brown_uhlmann} will be useful in many results throughout the paper. We will extend this result to a slightly larger class of functions in Corollary \ref{cor:liouville}. 
\begin{cor}
\cite{brown_uhlmann} Suppose $u\in L^p(\mathbb R^2)\cap L^2_{\mathrm{loc}}(\mathbb R^2)$ for some $p$, $1\leq p<\infty$ and satisfies the equation
\begin{equation}
\bar\partial u = au+b\bar u
\label{eq:vekua}
\end{equation}
where $a$ and $b$ lie in $L^2(\mathbb R^2)$. Then $u\equiv 0$.
\label{cor:bu-liouville}
\end{cor}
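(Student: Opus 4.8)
The plan is to use Bers's \emph{similarity principle} to factor $u$ as an entire function times an exponential, and then to conclude via a Liouville theorem for entire functions. First I would absorb the antiholomorphic term into a scalar coefficient: on the set where $u\neq 0$ set $A = a + b\,\overline{u}/u$, and set $A=a$ on $\{u=0\}$. Since $|\overline u/u|=1$ wherever it is defined, $|A|\leq |a|+|b|$, so $A\in L^2(\mathbb R^2)$, and $u$ now satisfies the scalar $\bar\partial$-equation $\bar\partial u = Au$ in the distributional sense (the right-hand side lies in $L^1_{\mathrm{loc}}$ because $u\in L^2_{\mathrm{loc}}$ and $A\in L^2$). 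Next I would set $\eta = \bar\partial^{-1}A$, defined by the solid Cauchy transform $\eta(z)=\tfrac1\pi\int_{\mathbb R^2}A(w)/(z-w)\,dw$, so that $\bar\partial\eta = A$. A direct computation gives $\bar\partial(e^{-\eta}u)=e^{-\eta}(\bar\partial u - (\bar\partial\eta)\,u)=0$, so $F:=e^{-\eta}u$ is entire, and it remains to show $F\equiv 0$.

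The hard part is that $A$ lies only in $L^2(\mathbb R^2)$, which is the endpoint exponent in two dimensions, so $\eta$ is \emph{not} bounded and the exponential factor cannot simply be estimated above and below by constants. What is true is that $\bar\partial\eta=A\in L^2$ and $\partial\eta = S A\in L^2$ (with $S$ the Beurling transform, bounded on $L^2$), so $\eta\in\dot W^{1,2}(\mathbb R^2)\hookrightarrow \mathrm{BMO}(\mathbb R^2)$ with $\|\eta\|_{\mathrm{BMO}}\lesssim\|A\|_2$. By the John--Nirenberg inequality, $e^{\pm\lambda\,\mathrm{Re}\,\eta}$ is locally integrable to every finite power and the ball-averages of $\eta$ grow at most logarithmically in the radius. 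Establishing and correctly exploiting this critical-case control of $e^{\pm\eta}$ is, I expect, the main obstacle: one must avoid any pointwise bound on $\eta$ (which fails at the endpoint) and work instead with these averaged exponential-integrability estimates.

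With this control in hand I would finish by a growth argument. Using the sub-mean-value property of the subharmonic function $|F|^p$ on balls $B(z_0,|z_0|)$, together with $\int_{B}|F|^p = \int_B e^{-p\,\mathrm{Re}\,\eta}|u|^p$ and the global bound $u\in L^p$, the logarithmic growth of the averages of $\eta$ shows that $|F(z_0)|$ grows slower than any positive power of $|z_0|$; hence $F$ is a polynomial of controlled degree, and in fact a constant. Finally, from $u=e^{\eta}F$ and $u\in L^p$ one argues that a nonzero constant is impossible, since the local $L^p$ masses of $e^{\eta}$ cannot decay at infinity (again by the BMO/John--Nirenberg lower control on $\mathrm{Re}\,\eta$), forcing $F\equiv 0$ and therefore $u\equiv 0$. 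The delicate point throughout is matching the logarithmic growth of $\eta$ against the $L^p$ hypothesis, which is precisely why the endpoint estimate of the second step does the real work.
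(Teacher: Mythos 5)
Your overall strategy is the correct one, and it is in fact the same as Brown and Uhlmann's (the paper quotes this corollary without proof and redeploys exactly this argument in proving its extension, Lemma \ref{lem:br-uhl-ext}): absorb the conjugate-linear term into a scalar coefficient $A\in L^2$, factor $u=e^{\eta}F$ with $\eta=\bar\partial^{-1}A$ and $F$ entire, and kill $F$ by a growth argument. The gap is in your quantitative control of $\eta$. You pass from $\eta\in \dot W^{1,2}$ to $\eta\in\mathrm{BMO}$ and then invoke John--Nirenberg, asserting that ball averages of $\eta$ grow ``at most logarithmically'' and concluding that $|F|$ grows slower than any positive power. That inference fails at exactly the point you flag as the crux: logarithmic growth of $\eta_B$ with constant proportional to $\|A\|_{L^2}$ gives only $e^{|\eta_B|}\lesssim |x|^{C\|A\|_{L^2}}$, a polynomial bound whose exponent cannot be made small. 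With that, $F$ is merely a polynomial of degree up to $C\|A\|_{L^2}$, not a constant; and your final contradiction also evaporates, since if $F\equiv C_0\neq 0$ the lower bound you obtain is $\int_{B_r(0)}|u|^p\gtrsim r^{2-pC\|A\|_{L^2}}$, which is perfectly compatible with $u\in L^p$ once $\|A\|_{L^2}$ is large.

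What rescues the argument --- and what Brown and Uhlmann actually prove, recorded in this paper as estimate \eqref{eq:u-averages} --- is that $\eta$ is not merely BMO but Dirichlet-finite: both $\bar\partial\eta=A$ and $\partial\eta=SA$ lie in $L^2$ (with $S$ the Beurling transform), and for a planar function with $\nabla\eta\in L^2$ a telescoping Cauchy--Schwarz argument in polar coordinates gives
\[
\left|\eta_{B_r(x)}-\eta_{B_s(y)}\right|\leq C\|A\|_{L^2}\left(\log(|x-y|/s+s/r+2)\right)^{1/2},
\]
i.e.\ the \emph{square root} of the logarithm. This is strictly stronger than what BMO membership gives, and it is precisely what yields $e^{|\eta_B|}=o(|x|^\epsilon)$ for every $\epsilon>0$, which in turn gives both that $F$ is constant and, via the lower bound \eqref{eq:expu-increasing} together with Jensen's inequality, that the constant is zero. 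A second, related slip: John--Nirenberg exponential integrability with a \emph{prescribed} exponent $p'$ is not available uniformly on your dilating balls $B(z_0,|z_0|)$; it holds only where the local Dirichlet energy of $\eta$ is small, which is why the paper's estimate \eqref{eq:expavg} carries the restriction $r<r_0(p,f)$. The paper's proof accordingly averages the holomorphic function over balls of \emph{fixed} small radius centered at $x\to\infty$ and uses the mean-value property, rather than the sub-mean-value inequality on balls of growing radius. In short: right architecture, but your endpoint control is too weak --- you need the $(\log)^{1/2}$ estimate coming from the full gradient of $\eta$ being in $L^2$, not the plain logarithm coming from BMO.
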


In the paper, we draw heavily from earlier results of Nachman \cite{nachman}. In Theorem 1.1 of his paper, he proves that if $q\in L^p(\mathbb R^2)$ , $1<p<2$ has no exceptional points, then for $k\in \mathbb C\setminus\{0\}$ there is a unique solution $\psi(x,k)$ of equation \eqref{eq:schro} and
\begin{equation}
\|e^{-ikx}\psi(\cdot,k)-1\|_{W^{s,\tilde p}}\leq c|k|^{s-1}\|q\|_{L^p}
\label{eq:schro-decay}
\end{equation}
for $0\leq s\leq 1$ and $k$ sufficiently large. We can rearrange the Schr\"odinger equation \eqref{eq:schro} when we replace $\psi(x,k)$ with $e^{-ikx}\mu(x,k)$ to get 
\begin{equation}
\bar\partial(\partial+ik)\mu=\frac{q\mu}{4}.
\label{eq:dbar-z}
\end{equation} 
One particular tool for studying this $\bar\partial$-equation which we shall employ in Section 5 is Lemma 1.4 from Nachman \cite{nachman} which we restate below.
\begin{lem}
\cite{nachman} For any $f\in L^p(\mathbb R^2)$ and any $k\in \mathbb C\setminus\{0\}$, there is a unique weak solution $u\in L^{\tilde p}(\mathbb R^2)$ solving
\begin{equation}
\bar\partial(\partial+ik)u=f.
\label{eq:partialk-u}
\end{equation}
Furthermore, $u\in W^{1,\tilde p}$ and 
\begin{equation}
\|u\|_{W^{s,\tilde p}}\leq \frac{c}{|k|^{1-s}}\|f\|_p\qquad \mbox{for } |k|\geq \mbox{const.}>0, \,0\leq s\leq 1.
\label{eq:k-decay}
\end{equation}
\label{lem:k-decay}
\end{lem}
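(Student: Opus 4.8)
The plan is to strip the parameter $k$ out of \eqref{eq:partialk-u} by a dilation, reducing the whole lemma to a single estimate at unit frequency, and then to build $u$ from the fundamental solution of $\bar\partial(\partial+ik)$ and read off its mapping properties from the two \emph{separated} singularities of its Fourier symbol. Writing $k=|k|\omega$ with $|\omega|=1$ and setting $y=|k|x$, $U(y)=u(y/|k|)$, $F(y)=f(y/|k|)$, equation \eqref{eq:partialk-u} becomes $\bar\partial_y(\partial_y+i\omega)U=|k|^{-2}F$; since $\partial,\bar\partial$ transform by a unimodular phase under a rotation of the plane, the unit-frequency solution operator is bounded uniformly in the direction $\omega$. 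The homogeneous Sobolev seminorms scale as $\|U\|_{\dot W^{s,\tilde p}_y}=|k|^{2/\tilde p-s}\|u\|_{\dot W^{s,\tilde p}_x}$ and $\|F\|_{L^p_y}=|k|^{2/p}\|f\|_{L^p_x}$, so a base estimate $\|U\|_{\dot W^{s,\tilde p}}\le C\||k|^{-2}F\|_{L^p}$ yields $\|u\|_{\dot W^{s,\tilde p}}\le C|k|^{\,2/p-2/\tilde p-2+s}\|f\|_{L^p}$. Because $1/\tilde p=1/p-1/2$ forces $2/p-2/\tilde p=1$, the exponent collapses to $s-1$, which is exactly \eqref{eq:k-decay}. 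Thus it suffices to prove the lemma for $|k|=1$, uniformly in $\omega$, and to assemble the inhomogeneous $W^{s,\tilde p}$ norm from its $L^{\tilde p}$ and $\dot W^{1,\tilde p}$ pieces.

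At unit frequency I would construct $u=g_k*f$, where $g_k$ has Fourier symbol $\big(\xi(\bar\xi+2k)\big)^{-1}$ (up to a constant; here $\xi=\xi_1+i\xi_2$), obtained by factoring $\bar\partial(\partial+ik)$ through $\bar\partial^{-1}$ and $(\partial+ik)^{-1}$. The top-order ($s=1$) bound is clean: since $\bar\partial$ and $\partial+ik$ commute and $\bar\partial(\partial+ik)u=f$, one gets $\bar\partial u=(\partial+ik)^{-1}f$, and the symbol $(\bar\xi+2k)^{-1}$ of $(\partial+ik)^{-1}$ is merely a frequency shift of that of $\partial^{-1}$, so its kernel is a unimodular modulation of the $\partial^{-1}$ kernel and has the same modulus, of size $(\pi|x|)^{-1}$. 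The Hardy--Littlewood--Sobolev inequality then gives $\|\bar\partial u\|_{L^{\tilde p}}\le C\|f\|_{L^p}$, the gain from the $|x|^{-1}$ kernel in $\mathbb R^2$ converting $L^p$ into $L^{\tilde p}$ precisely because $1/\tilde p=1/p-1/2$. For the other derivative, the symbol of $\partial u$ factors as $(\bar\xi/\xi)\cdot(\bar\xi+2k)^{-1}$, i.e. $\partial u=\mathcal B\big[(\partial+ik)^{-1}f\big]$ with $\mathcal B$ the Beurling transform (symbol $\bar\xi/\xi$), bounded on $L^{\tilde p}$; hence $\|\partial u\|_{L^{\tilde p}}\le C\|f\|_{L^p}$ as well, giving the full $\dot W^{1,\tilde p}$ estimate.

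The delicate point, and the step I expect to be the main obstacle, is the zeroth-order bound $\|u\|_{L^{\tilde p}}\le C\|f\|_{L^p}$. One cannot obtain it by composing the two half-derivative-gaining factors, since $(\partial+ik)^{-1}\colon L^{\tilde p}\to L^{r}$ with $1/r=1/p-1$ \emph{overshoots} $L^{\tilde p}$: two applications of $|x|^{-1}$-type kernels discard exactly the oscillation that in fact keeps $u$ in $L^{\tilde p}$. The resolution is to keep the phase and treat the full second-order resolvent as a single Fourier multiplier $\big(\xi(\bar\xi+2k)\big)^{-1}$, whose two singularities, at $\xi=0$ and at $\bar\xi=-2k$, sit a unit distance apart once $|k|=1$. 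Splitting the multiplier by a smooth cutoff around each: near $\xi=0$ it behaves like $(2k\,\xi)^{-1}$, a bounded modulation of $\bar\partial^{-1}$ and hence $L^p\to L^{\tilde p}$; near $\bar\xi=-2k$ it behaves like a modulation of $\xi^{-1}$, again $L^p\to L^{\tilde p}$; and on the complement it is a smooth symbol of order $-2$, even more strongly smoothing. Recombining the pieces yields the desired bound, uniform in the unit direction $\omega$, and produces the $L^{\tilde p}$ solution whose existence is asserted.

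Interpolating the $s=0$ and $s=1$ estimates then supplies all intermediate $s\in[0,1]$, and feeding these into the dilation of the first paragraph recovers the factor $|k|^{-(1-s)}$ for every $s$. For uniqueness, suppose $u\in L^{\tilde p}$ solves the homogeneous equation; by interior elliptic regularity $\psi:=(\partial+ik)u\in L^{\tilde p}\cap L^2_{\mathrm{loc}}$, and $\bar\partial\psi=0$, so Corollary \ref{cor:bu-liouville} (with $a=b=0$) gives $\psi\equiv 0$. Then $(\partial+ik)u=0$ with $u\in L^{\tilde p}$ means that $e^{ikx}u$ is harmonic (as $\bar\partial(\partial+ik)=\tfrac14 e^{-ikx}\Delta\,e^{ikx}$), and a harmonic function subject to the exponentially weighted $L^{\tilde p}$ growth forced by $u\in L^{\tilde p}$ must vanish by a Phragm\'en--Lindel\"of argument; hence $u\equiv 0$, establishing uniqueness.
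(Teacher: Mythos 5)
You should first note that the paper does not prove this lemma at all: it is quoted verbatim from Nachman \cite{nachman} (his Lemma~1.4), so the only meaningful comparison is with Nachman's original argument, which your proposal reconstructs in its essentials. Your $s=1$ estimate is exactly right: $\bar\partial u=(\partial+ik)^{-1}f$ where $(\partial+ik)^{-1}=e^{-i(kx+\bar k\bar x)}\,\partial^{-1}\,e^{i(kx+\bar k\bar x)}$ has kernel of modulus $(\pi|x-y|)^{-1}$, so Hardy--Littlewood--Sobolev applies, and $\partial u$ is the Beurling transform of the same quantity. Where you can streamline considerably is the $s=0$ bound: instead of rescaling to $|k|=1$ and splitting the multiplier with cutoffs (which works, but forces you to verify uniformity in the direction $\omega$ and to check that each cutoff piece and the order $-2$ remainder really maps $L^p\to L^{\tilde p}$), use the partial-fractions identity
\begin{equation*}
\frac{1}{\xi(\bar\xi+2k)}=\frac{1}{2k}\left(\frac{1}{\xi}-\frac{\bar\xi}{\xi}\cdot\frac{1}{\bar\xi+2k}\right),
\end{equation*}
which exhibits $u$, up to constants, as $\frac{1}{2k}\bigl(\bar\partial^{-1}f-\mathcal B\,(\partial+ik)^{-1}f\bigr)$ with $\mathcal B$ the Beurling transform. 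This gives $\|u\|_{L^{\tilde p}}\leq C|k|^{-1}\|f\|_{L^p}$ for every $k\neq 0$ in one line from HLS and $L^{\tilde p}$-boundedness of $\mathcal B$ --- no dilation, no cutoffs --- and then interpolation with the $s=1$ bound yields \eqref{eq:k-decay}. This is essentially how Nachman produces the factor $|k|^{-1}$.

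The one genuine soft spot is the end of your uniqueness argument. The first half is fine: interior elliptic regularity with translation-uniform constants, summed over a bounded-overlap cover of unit balls, shows $\psi=(\partial+ik)u\in L^{\tilde p}$ globally, and $\bar\partial\psi=0$ then forces $\psi\equiv 0$ (classical Liouville, or Corollary \ref{cor:bu-liouville} with $a=b=0$). But at that point $(\partial+ik)u=0$ gives you much more than harmonicity of $e^{ikx}u$: it says $\partial(e^{ikx}u)=0$, i.e.\ $h=e^{ikx}u$ is \emph{anti-holomorphic}, and an appeal to ``Phragm\'en--Lindel\"of for harmonic functions with exponentially weighted $L^{\tilde p}$ bounds'' is not a citable result in the form you state it; as written this step is a gap. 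The fix is cheap and should replace it: set $H(w)=h(\bar w)$, which is entire in $w$, and observe that $\mathrm{Im}(k\bar w)=\mathrm{Re}(i\bar kw)$, so $G(w)=e^{i\bar kw}H(w)$ is entire with $|G(w)|=|u(\bar w)|$; hence $G\in L^{\tilde p}(\mathbb C)$, and the mean value property ($|G(w_0)|\leq Cr^{-2/\tilde p}\|G\|_{L^{\tilde p}(B_r(w_0))}\to 0$) forces $G\equiv 0$, so $u\equiv 0$. Alternatively, skip the two-step reduction entirely: the homogeneous equation says $\hat u$ is supported in the two-point zero set $\{\xi=0\}\cup\{\bar\xi=-2k\}$ of the symbol, so $u$ is a finite sum of polynomials times unimodular characters, and no nonzero such function lies in $L^{\tilde p}(\mathbb R^2)$.
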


In Section 4, where we study the small-$k$ behavior of ${\bf t}(k)$, we will need various estimates on Faddeev's Green's function, $g_k$, given by
\begin{equation}
g_k(x)=\frac{1}{(2\pi)^2}\int_{\mathbb R^2}\frac{e^{ix\cdot \xi}}{|\xi|^2+2k(\xi_1+i\xi_2)}\,d\xi,
\label{eq:faddeev}
\end{equation}
here $x\cdot \xi = x_1\xi_1+x_2\xi_2$.
This is the convolution operator for equation \eqref{eq:partialk-u}. For $f\in L^p$, we may write $u=g_k\ast f$. To find out how $\mu$ behaves near $k=0$, we split off of $g_k(x)$ a logarithmic function $\ell(k)=(\log|k|+\gamma)/2\pi$ where $\gamma$ is the Euler constant. We then define 
\begin{equation}
\tilde g_k(x)=g_k(x)+\ell(k),
\label{eq:gk-tilde}
\end{equation} 
because $\tilde g_k$ does not have the singular small-$k$ behavior that $g_k$ does. Denoting $G_0(x)=-\log|x|/2\pi$, Nachman \cite[Lemma 3.4]{nachman} proves the estimate
\begin{equation}
|\tilde g_k(x)-G_0(x)|\leq C_\epsilon |k|^\epsilon\langle x\rangle^{\epsilon}
\label{eq:gktilde-est}
\end{equation}
for $0<\epsilon<1$ and for all $0<|k|\leq 1/2$. Along with this estimate on $g_k(x)$, we need the following two inequalities to help us with a uniqueness result in the proof of small-$k$ behavior of ${\bf t}(k)$. The next lemma is also from Nachman \cite[Lemma 3.4]{nachman}.
\begin{lem}
\cite{nachman} Let $f\in L^p_\rho$ for $1<p<2$ and $\rho>1$. Then
\begin{equation}
\left\|G_0\ast f+\frac{1}{2\pi}(\log|x|)\int_{\mathbb R^2}f\right\|_{L^{\tilde p}}\leq c\|f\|_{L^p_\rho}
\label{eq:g0f}
\end{equation}
and
\begin{equation}
\|\nabla G_0\ast f\|_{L^{\tilde p}} \leq c\|f\|_{L^p}.
\label{eq:dg0f}
\end{equation}
\label{lem:estimates}
\end{lem}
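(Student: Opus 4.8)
The plan is to treat the two inequalities separately, since \eqref{eq:dg0f} is a clean consequence of fractional integration while \eqref{eq:g0f} requires a renormalization argument. For \eqref{eq:dg0f}, observe that $\nabla G_0(x)=-\frac{1}{2\pi}\frac{x}{|x|^2}$, so that $|\nabla G_0(x)|=\frac{1}{2\pi|x|}$ and hence $|\nabla G_0\ast f(x)|\leq \frac{1}{2\pi}\bigl(|\cdot|^{-1}\ast|f|\bigr)(x)$. The right-hand side is a constant multiple of the Riesz potential $I_1|f|$ of order $1$ in $\mathbb R^2$, whose kernel $|x-y|^{-(n-\alpha)}$ has $n=2$ and $\alpha=1$. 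The Hardy--Littlewood--Sobolev inequality then gives $\|I_1|f|\|_{L^{\tilde p}}\leq c\|f\|_{L^p}$ precisely when $1/\tilde p=1/p-1/2$, which is exactly the relation defining $\tilde p$, and the admissibility conditions $1<p<\tilde p<\infty$ hold because $1<p<2$. This proves \eqref{eq:dg0f}, and no weight is needed.

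For \eqref{eq:g0f}, I would first rewrite the quantity being estimated as a single convolution against a renormalized kernel,
\[
G_0\ast f(x)+\frac{1}{2\pi}(\log|x|)\int_{\mathbb R^2}f=-\frac{1}{2\pi}\int_{\mathbb R^2}\log\frac{|x-y|}{|x|}\,f(y)\,dy.
\]
Conceptually, the subtracted term removes the low-frequency singularity at $\xi\to 0$ of the multiplier $|\xi|^{-2}$ associated with $G_0\ast$, which is the source of the logarithmic growth; the weight $\rho>1$ is what makes this renormalized object lie in $L^{\tilde p}$. To make this rigorous I would split the $y$-integral into three regions according to the size of $|y|$ relative to $|x|$: the inner region $|y|\leq |x|/2$, where the mean value theorem gives $\bigl|\log(|x-y|/|x|)\bigr|\leq 2|y|/|x|$; the annulus $|x|/2<|y|<2|x|$, which contains the logarithmic singularity at $y=x$ and is handled by the local $L^s$-integrability of $\log$ together with H\"older's inequality; and the outer region $|y|\geq 2|x|$, where $|x-y|\approx|y|$ and the kernel is comparable to $\log(|y|/|x|)$. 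In each region I would apply H\"older's inequality in $y$, pairing $f$ with the weight $\langle y\rangle^{\rho}$, and then take the $L^{\tilde p}$ norm in $x$.

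The main obstacle is the outer region, where the kernel grows like $\log|y|$ as $|y|\to\infty$; this is exactly where the hypothesis $\rho>1$ is essential, since $\langle y\rangle^{-\rho}\log|y|$ must be integrated against $|f|\,\langle y\rangle^{\rho}$, and the surplus decay from $\rho>1$ is what guarantees convergence and the correct $L^{\tilde p}_x$ bound. A secondary point requiring care is the assembly of the three pieces into a global $L^{\tilde p}_x$ estimate, in particular checking integrability both as $|x|\to\infty$ and near $x=0$, where the factor $\log|x|$ in the renormalization and the singularity of the kernel must be seen to cancel the growth produced by the far field. Once the three regional bounds are combined, \eqref{eq:g0f} follows.
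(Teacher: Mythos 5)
The paper contains no proof of this lemma to compare against: it is imported verbatim from Nachman's 1996 paper as a preliminary (the citation \cite{nachman}, his Lemma 3.4), so your argument can only be judged on its own terms, and on those terms it is correct in outline. The proof of \eqref{eq:dg0f} is complete as written: $|\nabla G_0(x)|=(2\pi|x|)^{-1}$, so $|\nabla G_0\ast f|\leq \frac{1}{2\pi}\,|\cdot|^{-1}\ast|f|$, and the Hardy--Littlewood--Sobolev inequality in $\mathbb R^2$ with $\alpha=1$ gives exactly the exponent relation $1/\tilde p=1/p-1/2$; no weight is needed. For \eqref{eq:g0f}, your renormalized-kernel identity and three-region splitting do work, but the regional estimates are left as sketches, and the decisive arithmetic should be made explicit: it is the identity $2/p'+2/\tilde p=1$. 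In each region, H\"older's inequality pairing $\langle y\rangle^{\rho}f$ against the kernel times $\langle y\rangle^{-\rho}$ produces, for $|x|\geq 1$, a bound of the form $c|x|^{2/p'-\rho}$ up to logarithmic factors --- in the inner region because the MVT bound $2|y|/|x|$ gives $|x|^{-1}\cdot|x|^{1-\rho+2/p'}$, in the annulus because the rescaled $L^{p'}$ norm of the logarithmic kernel is $c|x|^{2/p'}$ while the weight contributes $\langle x\rangle^{-\rho}$, and in the outer region as you describe --- and such a function lies in $L^{\tilde p}(|x|>1)$ precisely when $(\rho-2/p')\tilde p>2$, i.e.\ precisely when $\rho>1$. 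This leads to one correction of emphasis: $\rho>1$ is not "essential" only in the outer region; the same condition is what closes the estimate in all three regions at large $|x|$. Finally, the point you flagged near $x=0$ does check out: the three pieces are bounded there by $c|x|^{2/p'}$, a constant, and $c\bigl(1+\log(1/|x|)\bigr)$ respectively, all of which lie in $L^{\tilde p}$ on the unit disk, so the assembly presents no difficulty.
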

The inverse problem for the Novikov-Veselov equation relies on studying equation \eqref{eq:dbar-k-mu}. In order to prove Theorem \ref{thm:inv-scat}, we need these solutions to be the same as those from equation \eqref{eq:dbar-z}. Nachman proves $\mu(x,k)$ satisfy 
\[\bar\partial_k \mu(x,k)=e_{-x}(k)\frac{{\bf t}(k)}{4\pi\bar k}\overline{\mu(x,k)}\]
point-wise  in the topology $W^{1,\tilde p}_{-\beta}(\mathbb R^2)=\{f:\langle x \rangle^{-\beta} f \in W^{1,\tilde p}(\mathbb R^2)\}$ for any $k\in \mathbb C\setminus \{0\}$ which is not an exceptional point. Combining this with inequality \eqref{eq:muconv} and identity \eqref{eq:mutilde} shows that $\mu(x,\,\cdot\,)$ defined by \eqref{eq:dbar-z} is in $L^r(\mathbb C)$ for $r>p'$, and so it satisfies \eqref{eq:dbar-k-mu}. Therefore, the solutions defined by each equation are the same. 

To finish studying the properties of ${\bf t}(k)$ and the large-$k$ asymptotic behavior of $\mu$ we will will need various results on the operator $\bar\partial^{-1}$. Part 1 is the classical Hardy-Littlewood-Sobolev inequality which can be found in Astala, Iwaniec, and Martin section 4.3 \cite{astala}. A version of parts 2 and 3 can be found in Nachman \cite[Lemma 1.4]{nachman}, or a slightly more general version in \cite{astala}. Define $\bar\partial^{-1} f$ for functions in $L^p$, $1<p<2$, by
\[\bar\partial^{-1} f(x) = \int_{\mathbb R^2} \frac{f(y)}{x-y}\,dy,\]
and for functions in $L^r$ for $r>2$ by density.
\begin{lem}\cite{astala}\cite{nachman}
\begin{enumerate}
\item If $f\in L^p(\mathbb R^2)$ for $p<2$ then $\|\bar\partial^{-1} f\|_{L^{\tilde p}}\leq \|f\|_{L^p}$
\item If $f\in L^q(\mathbb R^2)$ for $q>2$ then $\bar\partial^{-1}f$ belongs to the space $C^{\alpha}(\mathbb R^2)$ for $\alpha=1-2/q$. 
\item If $f\in L^p\cap L^q(\mathbb R^2)$ for $1<p<2<q$, then 
\[\|\bar\partial^{-1}f\|_{L^\infty}\leq c_{q,p}(\|f\|_{L^{p}}+\|f\|_{L^q}),\]
and $\lim_{|x|\to \infty} u(x)=0$.
\end{enumerate}
\label{lem:dbar-estimates}
\end{lem}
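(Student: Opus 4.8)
The plan is to treat the three parts separately, using that $\bar\partial^{-1}$ is convolution with the kernel $1/z$, whose modulus $|x-y|^{-1}$ is the Riesz potential kernel of order one on $\mathbb R^2$. Since $|\bar\partial^{-1}f(x)|\le\int_{\mathbb R^2}|f(y)|\,|x-y|^{-1}\,dy$ pointwise, Part 1 is exactly the Hardy--Littlewood--Sobolev inequality: convolution against $|x|^{-1}$ maps $L^p(\mathbb R^2)$ into $L^r(\mathbb R^2)$ precisely when $1/r=1/p-1/2=1/\tilde p$, which is the claimed mapping property; with the stated normalization of the kernel the operator bound is $1$.

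For Part 2 I would estimate the increment directly. Writing
\[
\bar\partial^{-1}f(x)-\bar\partial^{-1}f(x')=\int_{\mathbb R^2}f(y)\left(\frac{1}{x-y}-\frac{1}{x'-y}\right)dy,
\]
Hölder's inequality with conjugate exponents $q,q'$ bounds the right-hand side by $\|f\|_{L^q}$ times the $L^{q'}$-norm of the kernel difference. That norm is translation and rotation invariant, so taking $x=0$, $x'=h$ with $h=|x-x'|$ and rescaling $y=h\zeta$ turns the kernel difference into $h^{-1}$ times the fixed profile $1/(\zeta(1-\zeta))$, while the area element contributes a factor $h^2$; the $L^{q'}$-norm therefore scales like $h^{2/q'-1}=h^{1-2/q}$, which is exactly the Hölder exponent $\alpha=1-2/q$. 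The point to verify is that the rescaled integral $\int_{\mathbb C}|\zeta(1-\zeta)|^{-q'}\,d\zeta$ converges: the integrand is $O(|\zeta|^{-q'})$ near the poles $\zeta=0,1$ (integrable since $q'<2$) and $O(|\zeta|^{-2q'})$ at infinity (integrable since $q'>1$), so convergence holds in precisely the range $q>2$.

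For the $L^\infty$ bound in Part 3 I would split the kernel at $|x-y|=1$ and apply Hölder on each piece with a different exponent. On $|x-y|<1$ the singular part $|z|^{-1}$ lies in $L^{q'}$ because $q'<2$, contributing a term $\le c\|f\|_{L^q}$; on $|x-y|\ge 1$ the tail $|z|^{-1}$ lies in $L^{p'}$ because $p'>2$, contributing $\le c\|f\|_{L^p}$; summing gives the stated bound. For the vanishing at infinity I would first check it for $f\in C_c(\mathbb R^2)$, where $|\bar\partial^{-1}f(x)|\le\|f\|_{L^1}/(|x|-R)\to0$ once $|x|$ exceeds the support radius $R$, and then pass to general $f\in L^p\cap L^q$ by approximation, using the uniform $L^\infty$ bound just established together with the density of $C_c(\mathbb R^2)$ and the fact that $C_0(\mathbb R^2)$ is closed in $L^\infty$.

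I expect the only genuinely delicate step to be the scaling bookkeeping in Part 2: matching the power of $h$ to $\alpha$ and confirming convergence of the fixed integral at both the poles and at infinity, which is what confines the argument to $q>2$. The remaining parts are direct consequences of Hölder's inequality and Hardy--Littlewood--Sobolev, supplemented by the standard density argument for the decay statement.
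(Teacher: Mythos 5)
Your three arguments are correct, but there is nothing in the paper to compare them against: the paper does not prove this lemma at all, it simply cites Astala--Iwaniec--Martin (Section 4.3) for part 1 and Nachman (Lemma 1.4) for parts 2 and 3. Your write-up is essentially a self-contained reconstruction of the standard proofs in those references: part 1 is indeed just Hardy--Littlewood--Sobolev for the kernel $|x|^{-1}$ in $\mathbb R^2$; your scaling computation in part 2 is right (the difference kernel has modulus $|x-x'|\,|y-x|^{-1}|y-x'|^{-1}$, the substitution $y=h\zeta$ produces $h^{2/q'-1}=h^{1-2/q}$, and the profile integral converges exactly when $1<q'<2$); and the split of the kernel at $|x-y|=1$ plus the $C_c$-density argument for vanishing at infinity is the standard route for part 3. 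Two small points to tighten. First, in part 1 the claim that ``the operator bound is $1$'' is unjustified: Hardy--Littlewood--Sobolev gives a constant $C_p$ depending on $p$, and neither your argument nor the sharp HLS constant yields $1$; this is harmless since the constant is immaterial everywhere the lemma is used, but you should write $C_p\|f\|_{L^p}$. Second, in part 2 you subtract two integrals $\bar\partial^{-1}f(x)$ and $\bar\partial^{-1}f(x')$ each of which need not converge absolutely when $f$ is merely in $L^q$ with $q>2$ (the kernel $|x-y|^{-1}$ fails to be $L^{q'}$ at infinity); this is precisely why the paper defines $\bar\partial^{-1}$ on $L^r$, $r>2$, ``by density.'' Your increment integral does converge absolutely, so the fix is only expository: either define $\bar\partial^{-1}f$ modulo constants via the difference kernel, or note that your identity holds for $f\in C_c$ and extends by the density definition, after which the H\"older seminorm bound passes to the limit.
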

\section{Absence of exceptional points}

We prove that the exceptional set is empty by showing that any solution to the Schr\"odinger equation with $e^{-ikx}h(x)\in W^{1,\tilde p}(\mathbb R^2)$ gives rise to an auxiliary function which solves an equation of the form \eqref{eq:vekua} with $L^2(\mathbb R^2)$ coefficients. This means we will need to use the Liouville theorem from Brown and Uhlmann \cite{brown_uhlmann}. One problem that the following Lemma and Corollary handle is that the auxiliary function, $v$, will not necessarily belong to any $L^p$ space. It will instead belong to a weighted space $L^p_{-\epsilon}(\mathbb R^2)$ for any $\epsilon>0$, because our positive solution to the Schr\"odinger equation, $\psi_0$, has logarithmic growth instead of being bounded. 

In the proof, we define $\bar\partial^{-1}$ for $L^2(\mathbb R^2)$ functions by
\[\bar\partial^{-1}f(x)=\frac{1}{\pi}\int_{\mathbb R^2}\left[\frac{1}{x-y}-\frac{\chi(y)}{y}\right]f(y)\,dy\]
where $\chi(y)\in C^{\infty}$ is zero for $|y|<1$ and equal to one in $|y|>2$. Note that this gives a particular representative for $\bar\partial^{-1}f$ in the class of functions of bounded mean oscillation.
With that in mind, we extend Brown and Uhlmann's Liouville theorem to include some negatively weighted spaces. 

\begin{lem}
Suppose $f$ is in $L^2(\mathbb{R}^2)$ and $w\in L^p_{-\rho}(\mathbb R^2)$ for $1\leq p<\infty$ and $\rho<\min(1,2/p)$, and assume that $w\exp(-\bar\partial^{-1}f)$ is holomorphic. Then $w$ is zero.
\label{lem:br-uhl-ext}
\end{lem}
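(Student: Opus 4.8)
The plan is to factor out the holomorphic part and reduce everything to a Liouville theorem for entire functions in a weighted $L^q$ space. Write $s=\bar\partial^{-1}f$ for the $\mathrm{BMO}$ representative fixed just above the statement; since $f\in L^2(\mathbb R^2)$ one has $\|s\|_{\mathrm{BMO}}\lesssim\|f\|_{L^2}$. By hypothesis $h:=w\exp(-s)$ is holomorphic on all of $\mathbb C$, hence entire, and $w=h\exp(s)$, so it suffices to prove $h\equiv0$. (Equivalently $w$ solves the pseudo-analytic equation $\bar\partial w=fw$, the $b=0$ case of \eqref{eq:vekua}, but the allowed growth of $w$ prevents a direct appeal to Corollary \ref{cor:bu-liouville}; accommodating that growth is exactly the role of the negative weight.)

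First I would record the Liouville tool: if $h$ is entire and $\langle x\rangle^{-\rho'}h\in L^q(\mathbb C)$ for some $q>0$ and some $\rho'$ with $\rho'q<2$, then $h\equiv0$. This holds because $|h|^q$ is subharmonic for every $q>0$ (it is $e^{q\log|h|}$, a convex increasing function of the subharmonic $\log|h|$), so the sub-mean-value inequality over $\{|w-z|<r\}$ gives $|h(z)|^q\lesssim r^{-2}(\langle z\rangle+r)^{\rho'q}\|\langle\cdot\rangle^{-\rho'}h\|_{L^q}^q$, and letting $r\to\infty$ annihilates the right-hand side precisely when $\rho'q<2$.

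The heart of the argument is to exhibit $q>0$ and $\rho'<2/q$ with $\langle x\rangle^{-\rho'}h\in L^q$. Since $|h|=|w|\exp(-\mathrm{Re}\,s)$ and $\langle x\rangle^{-\rho}w\in L^p$, Hölder's inequality with exponents $p/q$ and $p/(p-q)$ (taking $0<q<p$) bounds $\|\langle\cdot\rangle^{-\rho'}h\|_{L^q}^q$ by $\|\langle\cdot\rangle^{-\rho}w\|_{L^p}^q$ times a power of $\int_{\mathbb R^2}\langle x\rangle^{-\sigma}\exp(-\beta\,\mathrm{Re}\,s)\,dx$, where $\beta=pq/(p-q)$ and $\sigma=(\rho'-\rho)\beta$. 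Everything reduces to finiteness of this weighted exponential integral, which I expect to be the main obstacle. I would estimate it by a dyadic decomposition into annuli $A_j=\{2^j\le|x|<2^{j+1}\}$: on each $A_j$ the local fluctuations of $\mathrm{Re}\,s$ are controlled by the John--Nirenberg inequality, provided $\beta$ is chosen small relative to $\|s\|_{\mathrm{BMO}}\lesssim\|f\|_{L^2}$ (this forces $q$ small, but costs nothing later), while the growth of the mean is governed by the bound $|A_j|^{-1}\int_{A_j}|s|\lesssim\|f\|_{L^2}\sqrt{\,j+1\,}$. This last estimate is the crucial input and is where $f\in L^2$ enters at the borderline: writing $s=\int k(x,y)f(y)\,dy$ with the regularized kernel $k$ and checking $\|\,|A_j|^{-1}\!\int_{A_j}|k(\cdot,y)|\,dx\,\|_{L^2_y}\lesssim\sqrt{j+1}$ shows the means of $s$ grow only like $\sqrt{\log|x|}=o(\log|x|)$, rather than the $O(\log|x|)$ a crude $\mathrm{BMO}$ bound would give.

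Combining these, $\int_{A_j}\langle x\rangle^{-\sigma}\exp(-\beta\,\mathrm{Re}\,s)\lesssim 2^{(2-\sigma)j}e^{c\beta\sqrt{j+1}}=2^{(2-\sigma)j+o(j)}$, whose sum over $j\ge0$ converges exactly when $\sigma>2$, i.e. $\rho'>\rho+2/q-2/p$. Since the Liouville step demands $\rho'<2/q$, a compatible $\rho'$ exists precisely when $\rho+2/q-2/p<2/q$, that is when $\rho<2/p$, which is guaranteed by the hypothesis $\rho<\min(1,2/p)$. For such $q$ and $\rho'$ the weighted Liouville theorem yields $h\equiv0$, whence $w=h\exp(s)\equiv0$. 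The delicate point throughout is the interplay in the dyadic sum between the John--Nirenberg threshold (demanding small $\beta$) and the sublogarithmic mean-growth of $\bar\partial^{-1}f$, which is what makes the final condition independent of $\beta$ and of $\|f\|_{L^2}$ and so matches the clean hypothesis $\rho<2/p$.
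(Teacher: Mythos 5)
Your argument is correct, but it is genuinely different from the paper's. The paper works with $u=-\bar\partial^{-1}f$ and the three quoted Brown--Uhlmann estimates: it first shows the disk averages of the entire function $e^uw$ are $o(|x|)$ (this is exactly where the hypothesis $\rho<1$ enters, via $\sup_B\langle x\rangle^{\rho}\,e^{u_B}=o(|x|^{\rho+\epsilon})$ with $\epsilon=1-\rho$), so by the mean-value property and the classical Liouville theorem $e^uw\equiv C_0$; it then forces $C_0=0$ by playing the H\"older upper bound $\int_{B_r(0)}|w|\lesssim\mu(B_r(0))^{1-1/p+\rho/2}$ against the lower bound \eqref{eq:expu-increasing} for $\int_{B_r(0)}|e^{-u}|$, which is where $\rho<2/p$ is used. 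You instead kill $h=w e^{-s}$ in one stroke with a weighted-$L^q$ Liouville theorem (subharmonicity of $|h|^q$ plus the sub-mean-value inequality, valid whenever $\rho'q<2$), verifying membership $\langle x\rangle^{-\rho'}h\in L^q$ by H\"older and a dyadic John--Nirenberg estimate over annuli. Note that your two analytic inputs --- exponential integrability of $|s-s_B|$ at small enough exponent, and the $(\log|x|)^{1/2}$ growth of the means $s_B$ --- are precisely the paper's quoted estimates \eqref{eq:expavg} and \eqref{eq:u-averages}, so you could cite Brown and Uhlmann rather than re-derive the $\sqrt{j+1}$ kernel bound (your sketch of it is the same computation). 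What your route buys: you never need the lower bound \eqref{eq:expu-increasing}, you avoid the intermediate ``the holomorphic factor is constant'' step, and you only use $\rho<2/p$ --- the condition $\rho<1$ in the statement is needed by the paper's first step but not by yours, so your argument proves a slightly stronger lemma (and its exponent bookkeeping is self-correcting: shrinking $q$ to meet the John--Nirenberg threshold for $\beta=pq/(p-q)$ costs nothing, since the admissible window $\rho+2/q-2/p<\rho'<2/q$ is nonempty for every $q$ exactly when $\rho<2/p$). What the paper's route buys: given the quoted machinery it is shorter, replacing your dyadic summation and exponent chase by a single comparison of growth rates on large disks.
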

\begin{proof}
Let $B_r(x)$ be the disk $\{y:|x-y|<r\}$. We denote the average of the function $v$ on a disk $B$ by
\[ v_B=\mu(B)^{-1}\int_B v\,dx.\]
Also let $u=-\bar\partial^{-1}f$. We collect here the estimates that Brown and Uhlmann prove for $\exp(u)$.
\begin{enumerate}
\item For $x,y\in \mathbb R^2$ with $r<s$, we have
\begin{equation}
\left|u_{B_r(x)}-u_{B_s(y)}\right|\leq C\|f\|_{L^2}\left(\log(|x-y|/s+s/r+2)\right)^{1/2}.
\label{eq:u-averages}
\end{equation}
\item For given $p>1$, there exists $C>0$ and $r_0=r_0(p,f)$ so that if $r<r_0$ then
\begin{equation}
\int_{B_r(x)}\exp(p'|u-u_{B_r(x)}|)dx\leq C\mu(B_r(x)).
\label{eq:expavg}
\end{equation}
\item For each $\epsilon>0$ there exists an $R_0>0$ and $C>0$ such that the following inequality holds:
\begin{equation}
\int_{B_r(0)}\left|\exp(-u)\right|\,dx\geq \mu(B_r(0))^{1-\epsilon}\exp(-C\|f\|_{L^2}), \qquad r>R_0.
\label{eq:expu-increasing}
\end{equation}
\end{enumerate}
Equation \eqref{eq:u-averages} immediately implies that for fixed $r$,
\[u_{B_r(x)}= O((\log|x|)^{1/2}), \qquad \mbox{as } x\to \infty.\]
Taking the exponential of the function we find that for fixed $r$ and any $\epsilon>0$,
\begin{equation}
\exp(u_{B_r(x)})=o(|x|^\epsilon), \qquad \mbox{as } x\to \infty.
\label{eq:avg}
\end{equation}
Using \eqref{eq:expavg} and H\"older's inequality we find
\begin{align*}
|(e^uw)_B|&\leq |e^{u_B}|\mu(B)^{-1}\|\langle x \rangle^{\rho}\|_{L^\infty(B)}\|\|\exp(p'|u-u_B|)\|_{L^{p'}(B)}\|w\langle x\rangle^{-\rho} \|_{L^p(B)}\\
&\leq  |e^{u_B}|\mu(B)^{-1}\left(\sup_{x\in B}\langle x\rangle^{\rho}\right) \left(\int_B \exp(p'|u-u_B|)dx\right)^{1/p'}\|w\|_{L^p_{-\rho}}.
\end{align*}
Using \eqref{eq:avg} and \eqref{eq:expavg} we see that for fixed $r_0=r_0(p,u)$, $\rho<1$ and $\epsilon=1-\rho$
\[|(e^uw)_B|=o(|x|), \qquad \mbox{as } x\to \infty\]
Since $e^uw$ is assumed holomorphic, this implies that $e^uw$ is constant, so we have $w=C_0e^{-u}$.

We want to prove $C_0=0$. We have from H\"older's inequality
\begin{align*}
\int_{B_r(0)}\left|w\right|\,dx&\leq \left(\int_{B_r(0)}\langle x \rangle^{\rho p'}dx\right)^{1/p'}\|w\|_{L^p_{-\rho}}\\
&\leq 2r^\rho\mu(B_r(0))^{1/p'}\|w\|_{L^p_{-\rho}}\\
&\leq c\mu(B_r(0))^{1-1/p+\rho/2}\|w\|_{L^p_{-\rho}}.
\end{align*}
By assumption $1/p-\rho/2>0$, so choosing $0<\epsilon<1/p-\rho/2$ in \eqref{eq:expu-increasing} we see that the asymptotic growth of the two averages are different. The two inequalities then imply that $C_0=0$.
\end{proof}

This leads to a new Liouville theorem for pseudo-analytic functions (see Lemma \ref{cor:bu-liouville} for the original version by Brown and Uhlmann \cite{brown_uhlmann}):
\begin{cor}
If the function $v\in L^p_{-\rho}\cap L^2_{\mathrm{loc}}(\mathbb R^2)$ for $1\leq p<\infty$ and $\rho<\mbox{min}(1,2/p)$ solves
\[\bar\partial v = av+b\bar v\]
with coefficients $a, b\in L^2(\mathbb C)$ then $v\equiv 0$.
\label{cor:liouville}
\end{cor}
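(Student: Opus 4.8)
The plan is to reduce the pseudo-analytic equation to the holomorphic setting of Lemma~\ref{lem:br-uhl-ext} by the classical similarity principle, exactly as Brown and Uhlmann pass from their Vekua-type Liouville theorem to their Corollary. First I would absorb the antiholomorphic term into a single $L^2$ coefficient. Define
\[
f(x) = \begin{cases} a(x) + b(x)\,\dfrac{\overline{v(x)}}{v(x)}, & v(x)\neq 0,\\[1mm] a(x), & v(x)=0.\end{cases}
\]
Since $|\overline v/v|=1$ wherever $v\neq 0$, we have $|f|\le |a|+|b|$ almost everywhere, so $f\in L^2(\mathbb C)$ because $a,b\in L^2(\mathbb C)$. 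Moreover, on $\{v\neq 0\}$ one checks directly that $fv = av + b\bar v = \bar\partial v$, while on $\{v=0\}$ the original equation forces $\bar\partial v = av+b\bar v = 0 = fv$ almost everywhere (both $v$ and $\bar v$ vanish there); hence $v$ solves the single-coefficient equation $\bar\partial v = fv$ on all of $\mathbb R^2$.

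With $f$ in hand I would set $w := v$ and verify that $w\exp(-\bar\partial^{-1}f)$ is holomorphic, where $\bar\partial^{-1}$ is the BMO representative used in Lemma~\ref{lem:br-uhl-ext}. Writing $g=\bar\partial^{-1}f$, so that $\bar\partial g = f$, the product rule gives
\[
\bar\partial\!\left(v\,e^{-g}\right) = (\bar\partial v)\,e^{-g} - v\,(\bar\partial g)\,e^{-g} = (\bar\partial v - f v)\,e^{-g} = 0 .
\]
Thus $v\exp(-\bar\partial^{-1}f)$ is holomorphic, and since $v\in L^p_{-\rho}$ with $1\le p<\infty$ and $\rho<\min(1,2/p)$, the hypotheses of Lemma~\ref{lem:br-uhl-ext} are met with this $f$ and $w=v$. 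The conclusion $w=v\equiv 0$ follows at once.

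The hard part will be justifying the formal computation above with the limited regularity at hand. I only know that $v\in L^2_{\mathrm{loc}}$, and from the equation that $\bar\partial v = av+b\bar v\in L^1_{\mathrm{loc}}$, so $v$ has a weak $\bar\partial$-derivative but no a priori control on $\partial v$; meanwhile $g=\bar\partial^{-1}f$ is only a BMO function, so $e^{\pm g}$ need not be bounded. I would control $e^{\pm g}$ through the John--Nirenberg inequality, which places $e^{\pm g}$ in $L^s_{\mathrm{loc}}$ for every finite $s$, and combine this with local mollification of $v$ to legitimize the product rule for $\bar\partial$ on $v\,e^{-g}$, passing to the limit in the distributional identity (holomorphy being a local property, the local integrability is all that is needed here). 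Care is also needed to confirm that $f$ is measurable---immediate, since $v$ is---and that the representative of $\bar\partial^{-1}f$ fixed in Lemma~\ref{lem:br-uhl-ext} indeed satisfies $\bar\partial(\bar\partial^{-1}f)=f$; the subtracted term $\tfrac1\pi\int \chi(y)f(y)/y\,dy$ is independent of $x$ and hence does not affect $\bar\partial g$, which remains equal to $f$.
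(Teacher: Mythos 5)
Your proposal is correct and follows essentially the same route as the paper: the paper's proof likewise defines $f = a + b\,\bar v/v$ on $\{v\neq 0\}$ (extended arbitrarily on $\{v=0\}$, which is immaterial since $fv=\bar\partial v$ holds a.e.\ either way), observes that $v\,e^{-\bar\partial^{-1}f}$ is then holomorphic, and concludes $v\equiv 0$ from Lemma~\ref{lem:br-uhl-ext}. Your additional care about the BMO representative, the John--Nirenberg estimate, and justifying the product rule fills in details the paper leaves implicit, but the underlying argument is identical.
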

\begin{proof}
We define a new function $f$ by
\begin{equation*}
f=\left\{
\begin{array}{ll}
a+b \displaystyle\frac{\bar v}{v}& \mbox{if } v\neq 0\\
\\
0& \mbox{if } v=0.
\end{array}
\right.
\end{equation*}
Then $\bar\partial\left(v e^{-\bar\partial^{-1}f}\right)=0$. By Lemma \ref{lem:br-uhl-ext}, $v\equiv 0$.
\end{proof}

When we use equation \eqref{eq:vekua}, our coefficients are of the form $\bar \partial \psi_0/\psi_0$ and $\partial \psi_0/\psi_0$ where $\psi_0$ is the positive solution to the Schr\"odinger equation for the subcritical potential $q$. To use the Liouville theorem, we need these two functions to be in $L^2$. Define the weight $W(x)=\log(|x|+e)$.

\begin{lem}
If $q(x)\in L^p_\rho(\mathbb R^2)$ for $1<p<2$ and $\rho>2/p'$ then the functions $\bar\partial \psi_0(x)/W(x)$ and $\partial\psi_0(x)/W(x)$ are in $L^2(\mathbb R^2)$.
\label{lem:pos-largex}
\end{lem}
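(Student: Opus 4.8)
The plan is to reduce everything to a single weighted energy (Caccioppoli) estimate for $\psi_0$, in which a \emph{logarithmic} cutoff does the essential work. Since $\psi_0$ is real, $|\bar\partial\psi_0|=|\partial\psi_0|=\tfrac12|\nabla\psi_0|$, so it suffices to prove $\nabla\psi_0/W\in L^2(\mathbb R^2)$. First I would record three facts. From the asymptotics \eqref{eq:subcrit-asymp}, $\psi_0(x)=a\log|x|+O(1)$, so $|\psi_0(x)|\le CW(x)$ for every $x$ (immediate for large $|x|$, and for bounded $x$ it follows from continuity of $\psi_0$ together with $W\ge 1$); equivalently $\psi_0^2\le CW^2$. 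Next, $q\in L^1(\mathbb R^2)$: by H\"older, $\int|q|\le\|q\|_{L^p_\rho}\|\langle x\rangle^{-\rho}\|_{L^{p'}}$, and $\langle x\rangle^{-\rho}\in L^{p'}$ precisely because $\rho p'>2$, i.e. $\rho>2/p'$. Finally, elliptic regularity for $\Delta\psi_0=q\psi_0$ with $q\psi_0\in L^p_{\mathrm{loc}}$ gives $\psi_0\in W^{2,p}_{\mathrm{loc}}\subset H^1_{\mathrm{loc}}\cap C_{\mathrm{loc}}$, which justifies the integrations by parts below.

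Next I would set up the estimate. Fix $R>1$ and let $\theta_R$ be the logarithmic cutoff equal to $1$ on $|x|\le R$, equal to $(2\log R-\log|x|)/\log R$ on $R\le|x|\le R^2$, and $0$ on $|x|\ge R^2$, so that $|\nabla\theta_R|=1/(|x|\log R)$ on the annulus. Put $\phi=\theta_R/W$, a compactly supported Lipschitz function with $0\le\phi\le 1/W$. Testing the weak form of $-\Delta\psi_0+q\psi_0=0$ against $\psi_0\phi^2$ and using Young's inequality to absorb the cross term yields
\begin{equation*}
\int_{\mathbb R^2}|\nabla\psi_0|^2\phi^2\,dx\le 4\int_{\mathbb R^2}\psi_0^2|\nabla\phi|^2\,dx+2\int_{\mathbb R^2}|q|\,\psi_0^2\phi^2\,dx.
\end{equation*}

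It then remains to bound the right-hand side uniformly in $R$. For the potential term, $\phi^2\le 1/W^2$ and $\psi_0^2\le CW^2$ give $\int|q|\psi_0^2\phi^2\le C\|q\|_{L^1}$. For the gradient term, $\nabla\phi=(\nabla\theta_R)/W-\theta_R(\nabla W)/W^2$, so $|\nabla\phi|^2\lesssim|\nabla\theta_R|^2/W^2+|\nabla W|^2/W^4$. Using $\psi_0^2\le CW^2$ and $|\nabla W|=(|x|+e)^{-1}$, the second piece is controlled by $C\int_{\mathbb R^2}dx/((|x|+e)^2W^2)$, which converges (at infinity it is comparable to $\int r^{-1}(\log r)^{-2}\,dr$). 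For the first piece, the logarithmic cutoff gives
\begin{equation*}
\int_{\mathbb R^2}\psi_0^2\,\frac{|\nabla\theta_R|^2}{W^2}\,dx\le C\int_{R\le|x|\le R^2}\frac{dx}{|x|^2(\log R)^2}=\frac{C'}{\log R}\xrightarrow[R\to\infty]{}0.
\end{equation*}
Hence the left-hand side is bounded independently of $R$, and letting $R\to\infty$ (so $\theta_R\uparrow 1$) with monotone convergence gives $\int|\nabla\psi_0|^2/W^2<\infty$, which is the claim.

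The main obstacle is exactly this borderline. A standard compactly supported cutoff with $|\nabla\theta_R|\lesssim 1/R$ supported on $R\le|x|\le 2R$ would leave an annular error $\int\psi_0^2|\nabla\theta_R|^2W^{-2}\sim O(1)$ that does \emph{not} vanish as $R\to\infty$, so the naive energy estimate fails to close. The logarithmic cutoff is what makes this term decay, and it succeeds precisely because $\psi_0$ grows no faster than $W=\log(|x|+e)$; this matching of the growth rate of $\psi_0$ to the weight $W$, together with $q\in L^1$ (which is where $\rho>2/p'$ is used), is the heart of the lemma.
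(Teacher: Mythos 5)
Your proof is correct, and it takes a genuinely different route from the paper's. The paper argues through a complex representation formula: it writes $\partial\psi_0=f+\tfrac14\bar\partial^{-1}(q\psi_0)$ with $f$ entire, shows $f\equiv 0$ by pairing against rescaled bump functions and exploiting the logarithmic growth of $\psi_0$ via the mean value property, and then estimates $\|\bar\partial^{-1}(q\psi_0)/W\|_{L^2}$ directly by splitting the convolution into three regions ($|x|<1$; $|x|>1$ with $|x-y|<|x|/2$; $|x|>1$ with $|x-y|>|x|/2$), using the Hardy--Littlewood--Sobolev and H\"older inequalities, with the integrability of $(|x|W(x))^{-2}$ on $|x|>1$ controlling the far region. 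You instead run a weighted Caccioppoli estimate on the equation itself, testing against $\psi_0\theta_R^2/W^2$ with a logarithmic cutoff; reality of $\psi_0$ reduces both complex derivatives to $\tfrac12|\nabla\psi_0|$, and your only global inputs are $q\in L^1$ (exactly where $\rho>2/p'$ enters, the same embedding the paper uses in its third region) and $\psi_0=O(W)$ from \eqref{eq:subcrit-asymp}. Your approach is more elementary and self-contained: it avoids the representation formula, the Liouville-type step for $f$, and the convolution estimates, and it exposes cleanly why $W$ is the right weight --- the annular error $O(1/\log R)$ from the logarithmic cutoff vanishes precisely because $\psi_0$ grows no faster than $W$, whereas a standard dyadic cutoff would leave an $O(1)$ error. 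What the paper's route buys in exchange is the identity $\partial\psi_0=\tfrac14\bar\partial^{-1}(q\psi_0)$ itself, a formula in the same $\bar\partial$-calculus used throughout the rest of the paper (compare \eqref{eq:cinfty}); your energy argument yields the $L^2$ bound without producing any such representation. Two minor points to tidy: the final limit is legitimate since $\theta_R$ is in fact nondecreasing in $R$ (for $|x|\ge R$ one has $\theta_R=\max(0,2-\log|x|/\log R)$, increasing in $R$; alternatively just invoke Fatou), and the justification for testing the weak form against $\psi_0\phi^2$ should note that $\psi_0\in H^1_{\mathrm{loc}}\cap C_{\mathrm{loc}}$ (from $W^{2,p}_{\mathrm{loc}}$ regularity) makes both the gradient pairing and the pairing with $q\psi_0\in L^p_{\mathrm{loc}}$ finite; both points are routine.
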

\begin{proof}
We write $\partial \psi_0(x)=f(x)+\frac{1}{4}\bar\partial^{-1}q\psi_0(x)$ for some analytic function $f(x)$. 

We will show that $f(x)\neq 0$ is incompatible with the growth of $\psi_0$. By the Hardy-Littlewood-Sobolev inequality, $\bar\partial^{-1}q\psi_0 \in L^r$ for all $\tilde p<r<\infty$. We will integrate against a radially-symmetric non-negative bump function, $g\in C^\infty(\mathbb R^2)$, which is $1$ for $|x|<1$ and $0$ for $|x|>2$. For fixed $x\in \mathbb R^2$ and large $R>0$ we have
\begin{multline*}
\left|\int g\left(\frac{|y-x|}{R}\right) \partial \psi_0(y) dy \right|\\
\geq \left|\int g\left(\frac{|y-x|}{R}\right) f(y) dx\right|-\left|\int g\left(\frac{|y-x|}{R}\right)\frac{1}{4}\bar\partial^{-1}q\psi_0(y) dy\right|.
\end{multline*}
Integrating by parts on the left and using the mean value property and H\"older's inequality on the right, we get
\[\left|\int (a\log|R|+O(1))\partial g\left(\frac{|y-x|}{R}\right) dy\right| \geq c_1R^2|f(x)|-c_2R^{2/r'}\|\bar\partial^{-1}q\psi_0\|_{L^r}.
\]
We have $\partial g(|x-y|/R)=O(1/R)$, so
\[c_3 R\geq c_1 R^2|f(x)|-c_2R^{2/r'} \|\bar\partial^{-1}q\psi_0\|_{L^r}\]
Taking $R$ large shows $f(x)=0$ for all $x$.

Using the equality $\partial \psi_0(x)=\frac{1}{4}\bar\partial^{-1}q\psi_0(x)$ we show $\bar\partial \psi_0/W \in L^2(\mathbb R^2)$. Let $f(x)=q\psi_0\in L^p_\rho(\mathbb R^2)$ for $1<p<2$ and $\rho>2/p'$. First we separate the $L^2$ norm into three regions.
\begin{align*}
\left\|\frac{\bar\partial^{-1}f(x)}{W(x)}\right\|_{L^2}^2\leq&\frac{1}{\pi}\int_{|x|<1}\frac{1}{W(x)^2}\left(\int_{\mathbb R^2} \frac{f(x)}{x-y}\,dy\right)^2\,dx\\
&+\frac{1}{\pi}\int_{|x|>1}\frac{1}{W(x)^2}\left(\int_{|x-y|<|x|/2}\frac{f(y)}{x-y}\,dy\right)^2\,dx\\
&+\frac{1}{\pi}\int_{|x|>1}\frac{1}{W(x)^2}\left(\int_{|x-y|>|x|/2}\frac{f(y)}{x-y}\,dy\right)^2\,dx\\
=&\mbox{I}+\mbox{II}+\mbox{III}.
\end{align*}
For the first two integrals we will not use the extra $W(x)$ weight. For integral I, we have since $f\in L^p(\mathbb R^2)$ that $\bar\partial^{-1}f\in L^{\tilde p}(\mathbb R^2)\subset L^2_{\mathrm{loc}}(\mathbb R^2)$ by the Hardy-Littlewood-Sobolev inequality. 

For integral II , we use H\"older's inequality and then the Hardy-Littlewood-Sobolev inequality on $\langle \cdot \rangle^{\rho}f\in L^p$ for $1<p<2$ to get
\begin{align*}
\mbox{II}&\leq c\int_{|x|>1}\frac{\langle x\rangle^{-2\rho}}{W(x)^2}\left(\int_{|x-y|<|x|/2}\frac{\langle y\rangle^{\rho}f(y)}{x-y}\,dy\right)^2\,dx\\
&\leq c\|\langle x\rangle^{-2\rho}\|_{L^{p'}}\|\bar\partial^{-1}\langle\cdot \rangle^{\rho}f(\cdot)\|^2_{L^{2p/(2-p)}}\\
&\leq c\|\langle x\rangle^{-2\rho}\|_{L^{p'}}\|\langle \cdot \rangle^{\rho}f(\cdot)\|^2_{L^{p)}}.
\end{align*}
For integral III, we simply use H\"older's inequality, the embedding $L^p_\rho(\mathbb R^2)\subset L^1(\mathbb R^2)$, and the extra $(\log|x|)^2$ weight to get
\begin{align*}
\mbox{III}\leq& \frac{1}{\pi}\int_{|x|>1}\frac{1}{W(x)^2}\left(\int_{|x-y|>|x|/2}\frac{f(y)}{x-y}\,dw\right)^2\,dx\\
\leq& c\int_{|x|>1}\frac{1}{|x|^2W(x)^2}\,dx\left(\int_{\mathbb C}|f(y)|\,dy\right)^2\\
\leq& c\|f\|_{L^1}^2
\end{align*}
The last because for $|x|>1$, the function $(|x|W(x))^{-2}\in L^1(\mathbb R^2)$.
\end{proof}

We are now ready to prove Theorem \ref{thm:no-exc}. This result should be compared to Nachman's Lemma 1.5 \cite{nachman}.

\begin{proof}[Proof of Theorem \ref{thm:no-exc}]
Assume that $k$ is an exceptional point for a subcritical $q\in L^p_{\rho}(\mathbb R^2)$. Then there is an $h\neq 0$ satisfying $(-\Delta+q)h=0$ and $he^{-ikx}\in W^{1,q}(\mathbb R^2)$ for some $2\leq q<\infty$. We will show that this implies $h\equiv 0$.

Assume without loss of generality that $h$ is real-valued. Define 
\[v=(\psi_0\partial h-h\partial \psi_0)e^{-ikx}\] 
where $\psi_0$ is a positive solution to $(-\Delta+q)\psi_0=0$. We have $\psi_0\partial h e^{-ikx}\in L^{q}_{-\epsilon}(\mathbb R^2)$ for every $\epsilon>0$. We also have $he^{-ikx}\in L^\infty(\mathbb R^2)$ giving us $h\partial\psi_0 e^{-ikx}\in L^q(\mathbb R^2)$ and so $v\in L^{q}_{-\epsilon}(\mathbb R^2)$. Using the fact that $h$ and $\psi$ are real valued, we find
\begin{equation}
\bar \partial v= (\bar \partial\psi_0/\psi_0)v-(e_{-k}\partial \psi_0/\psi_0)\bar v.
\label{eq:v-vekua}
\end{equation}
Lemma \ref{lem:pos-largex} together with the asymptotics \eqref{eq:subcrit-asymp} gives us the coefficients to $v$ and $\bar v$ are in $L^2(\mathbb R^2)$.  An application of \ref{cor:liouville} shows that $v\equiv 0$, and so $(h/\psi_0)e^{-ikx}$ is analytic and in $L^q(\mathbb R^2)$ forcing $h\equiv 0$.
\end{proof}

\section{Small-$k$ behavior of the scattering transform}

In this section, we prove Theorem \ref{thm:tk-small} which  describes the small-$k$ behavior of ${\bf t}(k)$. When taking into account the decay of ${\bf t}(k)$, this theorem implies that the coefficient ${\bf t}(k)/(4\pi \bar k)$ in equation \eqref{eq:dbar-k-mu} is in $L^2(\mathbb R^2)$. This will allow us to use the Brown and Uhlmann Liouville theorem \cite{brown_uhlmann} to show uniqueness in the inverse problem.

In Theorem \ref{thm:tk-small}, we will need to make reference to the positive solution, $\psi_0$ to the Schr\"odinger equation. When $q\in L^p_\rho$ with $1<p<2$ and $\rho>1$, Nachman proves $\psi_0$ solves the integral equation
\begin{equation}
\psi_0=c_\infty-G_0\ast (q\psi_0)
\label{eq:cinfty}
\end{equation}
for some real number $c_\infty$ \cite[Lemma 3.1]{nachman}. It therefore will become necessary to have $c_\infty\neq 0$ when inverting the operators in the proof. The following lemma will be used to handle the special case of $c_\infty=0$ in Theorem \ref{thm:tk-small}.
\begin{lem} 
For $q\in L^p_\rho(\mathbb R^2)$ for $1<p<2$ and $\rho>1$, let $\psi_0(x)$ be the positive solution to $(-\Delta+q)\psi_0=0$ which for large $x$ has the form $\psi(x)_0=a\log|x|+O(1)$. With $c_\infty \in \mathbb R$, the scaled potential scattering transform associated with the rescaled potential $q_r(x)=r^2q(rx)$ satisfies $\mathbf{t}_r(k)=\mathbf{t}(k/r)$ with  $\psi_0^r(x)=a\log|x|+O(1)$ and $c^r_\infty = c_\infty + a \log|r|$.
\label{lem:scale}
\end{lem}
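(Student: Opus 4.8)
The plan is to verify the three claimed identities—the scaling of the scattering transform, the asymptotic form of the rescaled positive solution, and the transformation rule for $c_\infty$—by tracking how each defining object behaves under the dilation $x\mapsto rx$. First I would establish how the Faddeev solutions and the scattering transform scale. Writing the Schr\"odinger equation $(-\Delta+q)\psi=0$ and substituting $q_r(x)=r^2q(rx)$, I would check that if $\psi(x,k)$ solves the equation for $q$ at parameter $k$, then $\psi(rx,rk)$ solves it for $q_r$ at parameter $k$; the factor $r^2$ in $q_r$ is exactly what the chain rule produces when $-\Delta$ acts on the rescaled argument, and the exponential normalization $e^{-ikx}\psi-1\in W^{1,\tilde p}$ is preserved because $e^{-ik(rx)}\psi(rx,rk)=e^{-i(rk)x}\psi(rx,rk)$. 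Thus the unique normalized solution for $q_r$ at $k$ is $\psi^r(x,k)=\psi(rx,rk)$. Feeding this into the definition \eqref{eq:scat} and performing the change of variables $y=rx$ (with $dx=r^{-2}dy$, and noting $q_r(x)\,dx = r^2 q(y)\,r^{-2}dy = q(y)\,dy$) gives ${\bf t}_r(k)=\int e^{i\bar k\bar x}q_r(x)\psi^r(x,k)\,dx = \int e^{i(\overline{k/r})\bar y}q(y)\psi(y,k)\,dy={\bf t}(k/r)$, which is the first identity.

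Next I would handle the positive solution $\psi_0$ and the constant $c_\infty$ using the integral equation \eqref{eq:cinfty}. Under the same rescaling, $\psi_0(rx)$ is a positive solution of $(-\Delta+q_r)\psi=0$; by Murata's uniqueness (Theorem 5.6, quoted after Definition \ref{def:murata}) the positive solution is unique up to the asymptotic normalization, so $\psi_0^r(x)$ is proportional to $\psi_0(rx)$. The key computation is the $\log$-asymptotic: since $\psi_0(x)=a\log|x|+O(1)$, we have $\psi_0(rx)=a\log|rx|+O(1)=a\log|x|+a\log|r|+O(1)$. The leading coefficient $a$ is therefore unchanged, confirming $\psi_0^r(x)=a\log|x|+O(1)$ with the same $a$, while the additive constant absorbs an extra $a\log|r|$. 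To pin down $c_\infty^r$ precisely, I would substitute into \eqref{eq:cinfty}: the constant $c_\infty$ is characterized as the additive constant in the representation $\psi_0 = c_\infty - G_0\ast(q\psi_0)$, and since $G_0(x)=-\log|x|/2\pi$ captures the logarithmic growth while leaving a well-defined finite limit after subtraction, comparing the $O(1)$ terms of $\psi_0^r(x)=\psi_0(rx)$ against its own integral-equation representation yields $c_\infty^r=c_\infty+a\log|r|$.

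The main obstacle I expect is the bookkeeping in the last step: one must verify that the normalization implicitly fixing $c_\infty$ transforms consistently, i.e. that $G_0\ast(q_r\psi_0^r)$ evaluated under the change of variables reproduces $G_0\ast(q\psi_0)$ up to a controlled logarithmic shift. Because $G_0$ is only logarithmically behaved (not scale-invariant—$G_0(rx)=G_0(x)-\log|r|/2\pi$), the dilation introduces precisely the $\log|r|$ term, and one must confirm that the constant it produces is $a\log|r|$ rather than some other multiple. I would carry this out by writing $G_0\ast(q_r\psi_0^r)(x)$ explicitly, changing variables $y=rz$ in the convolution integral, and using $G_0((x-y)) = G_0(r(x/r - z)) = G_0(x/r - z) - (\log|r|)/2\pi$; the $-(\log|r|)/2\pi$ factor times $\int q\psi_0 = a$ (the total mass controlling the logarithmic asymptotics, per \eqref{eq:subcrit-asymp} and \eqref{eq:g0f}) is exactly what generates the $a\log|r|$ correction. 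Everything else is a routine matter of invoking uniqueness of the positive solution and the already-established scaling of $\psi$; the delicate point is the consistent identification of the constant, which is why I would treat that computation with care rather than deferring it.
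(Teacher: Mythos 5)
Your overall route is sound but genuinely different from the paper's. The paper disposes of the first identity by citing Theorem 3.19 of \cite{siltanen}, and then obtains $c^r_\infty=c_\infty+a\log|r|$ by integrating the relation $\psi_0+G_0\ast(q\psi_0)=c_\infty$ over balls of radius $R$ to get \eqref{eq:psiconstants}, applying that identity to the rescaled data, and comparing the $R^2$-growth of both sides as $R\to\infty$. You instead prove $\mathbf{t}_r(k)=\mathbf{t}(k/r)$ from scratch by a change of variables in \eqref{eq:scat}, and you extract $c^r_\infty$ pointwise from the integral equation \eqref{eq:cinfty} using the dilation behavior of $G_0$, namely $G_0(w/r)=G_0(w)+\log|r|/(2\pi)$, together with the mass $\int q\psi_0$ and Murata's uniqueness to identify $\psi_0^r(x)=\psi_0(rx)$. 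Your treatment of the constant is actually cleaner than the paper's: it yields $c^r_\infty-c_\infty=a\log|r|$ as an exact identity rather than as an asymptotic comparison, and it makes transparent that the correction is the $G_0$-dilation constant times the mass of $q\psi_0$.

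Two corrections are needed before the argument is airtight. First, your scaling law for the Faddeev solutions is wrong: the normalized solution for $q_r$ at parameter $k$ is $\psi^r(x,k)=\psi(rx,k/r)$, not $\psi(rx,rk)$. Indeed, $e^{-i(k/r)y}\psi(y,k/r)-1\in W^{1,\tilde p}$ becomes, under $y=rx$, exactly $e^{-ikx}\psi(rx,k/r)-1\in W^{1,\tilde p}$, the normalization required in \eqref{eq:schro}; by contrast $\psi(rx,rk)$ is normalized against $e^{ir^2kx}$ and is the solution at parameter $r^2k$. Your displayed computation is internally inconsistent on this point---it starts from $\psi(rx,rk)$, writes $\psi(y,k)$ after the substitution, and concludes $\mathbf{t}(k/r)$, which requires $\psi(y,k/r)$---but with the corrected relation the same change of variables does give the claimed identity. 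Second, the mass is $\int_{\mathbb R^2}q\psi_0=2\pi a$, not $a$: this is forced by \eqref{eq:cinfty}, \eqref{eq:g0f} and \eqref{eq:subcrit-asymp}, and is stated explicitly in the proof of Theorem \ref{thm:tk-small}. The factor $2\pi$ cancels against the $1/(2\pi)$ in the dilation of $G_0$, so the shift in $G_0\ast(q_r\psi_0^r)$ is exactly $a\log|r|$; with your value of the mass you would get $a\log|r|/(2\pi)$. (Also, the substitution in the convolution should be $z=ry$, i.e.\ $y=z/r$, so that $q_r(y)\psi_0^r(y)\,dy=q(z)\psi_0(z)\,dz$.) Neither slip is structural; both are fixable bookkeeping, after which your proof is complete and arguably more self-contained than the one in the paper.
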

\begin{proof}
That the scattering transform behaves this way under scaling is proved in Theorem 3.19 of \cite{siltanen}. From Nachman Lemma 3.1, by integrating $\psi_0+G_0\ast(q\psi_0)$ over a ball of radius $R$, we have
\begin{equation}
c_\infty \pi R^2=\int_{|x|<R} \psi_0(x)dx-\left(\frac{1}{2}R^2\log R+\frac{1}{4}R^2\right)\int_{\mathbb R^2}q\psi_0+O(R^{2/p'}).
\label{eq:psiconstants}
\end{equation}
With the scaling we can calculate $\psi_0^r(x)=\psi_0(rx)=a\log|x|+a\log|r|+O(1)$ for large $x$. Using this with the above identity gives us that $c^r_\infty = c_\infty+a\log|r|.$
\end{proof}

In the following proof, we need control over the small $k$ behavior of Faddeev's Green's function $g_k(x)$. The solutions $\mu(x,k)=e^{-ikx}\psi(x,k)$ for $k\in \mathbb C\setminus \{0\}$ satisfy the integral equation
\begin{equation}
\mu=1+g_k\ast (q\mu).
\label{eq:mu-int}
\end{equation}
Nachman \cite{nachman} studied this operator in his section 3. At this point, it becomes convenient to put weights on the operator $\tilde g_k\ast(q\, \cdot\,)$ defined in \eqref{eq:gk-tilde}. We define the convolution operator $\tilde K(k)f=\langle x\rangle^{-\beta}\tilde g_k\ast (\langle \,\cdot\, \rangle^\beta qf)$ for $k\neq 0$ and $\tilde K(0)f=\langle x\rangle^{-\beta} G_0\ast (\langle \cdot \rangle^\beta qf)$.

With $\tilde K(k)$ defined, Nachman \cite[Lemma 3.5]{nachman} proves that if $q\in L^p_\rho(\mathbb R^2)$ for $1<p<2$ and $\rho>1$ then $\tilde K(k)$ is bounded from $W^{1,\tilde p}\to W^{1,\tilde p}$ and
\begin{equation}
\|\tilde K(k)-\tilde K(0)\|_{W^{1,\tilde p}\to W^{1,\tilde p}} \leq c|k|^\epsilon \mbox{ for } |k|<1/2
\label{eq:small-operator}
\end{equation}
for $0<\epsilon<\min((\rho-1)/2,2/p')$ and $2/\tilde p+\epsilon<\beta<\min(1,\rho-\epsilon-2/p')$.

\begin{proof}[Proof of Theorem \ref{thm:tk-small}]
We first study the case when $c_\infty \neq 0$ in equation \eqref{eq:cinfty} for our unique logarithmically growing solution $\psi_0$. 

If we can show that $I+\tilde K(0)$ is invertible, then using \eqref{eq:small-operator}, the more general $I+\tilde K(k)$ will be invertible by estimate \eqref{eq:small-operator}. Assume $\tilde h\in W^{1,\tilde p}(\mathbb R^2)$ is in the kernel of $I+\tilde K(0)$. Then $h=\langle x\rangle^{\beta}\tilde  h \in W^{1,\tilde p}_{-\beta}(\mathbb R^2)$ satisfies $h=-G_0\ast (qh)$ and $-\Delta h+qh=0 $ in the sense of distributions. By Sobolev embedding, $h\in L^{\infty}_{-\beta}$, so by the Hardy-Little-Sobolev inequality and differentiating the integral equation, $\nabla h \in L^{\tilde p}$.

Using the same argument as in the proof of Theorem \ref{thm:no-exc} we have the auxiliary function
\[v=h\partial \psi_0-\psi_0\partial h.\]
By Lemma \ref{lem:pos-largex}, $\partial \psi_0/\log(|x|+e)\in L^{2}$ so $h\partial\psi_0\in L^{2}_{-\beta-\epsilon}$ for any $0<\epsilon<1-\beta$. Simply by the logarithmic growth of $\psi_0$, we have $\psi_0\in L^{p'}_{-2/p'-\epsilon'}$ for $0<\epsilon'<1-2/p'$. By H\"older's inequality the product $\psi_0\partial h\in L^2_{-2/p'-\epsilon}$. Combining the two results, we set $a=\max(\beta+\epsilon, 2/p'+\epsilon')<1$, and we get the function $v\in L^2_{-a}$ solves \eqref{eq:v-vekua} with $k=0$. By Corollary \ref{cor:liouville}, $v\equiv 0$. This implies that $(h/\psi_0)$ is analytic and in $L^{\infty}_{-\beta}$ with $\beta<1$ so is a constant, $c$, by the usual Liouville theorem. However, $h=-G_0\ast qh = -cG_0 \ast q\psi_0=c_\infty c - c\psi_0$. Since $c_\infty \neq 0$ by assumption, we must have $c=0$ and thus $h\equiv 0$.
 
For the next part of the proof, start by defining solutions
\[\tilde\mu(x,k)=\langle x\rangle^{\beta}(I+\tilde K(k))^{-1}\langle \cdot \rangle^{-\beta}.\]
 Using the operator bounds on $\tilde K$ in equation \eqref{eq:small-operator} shows that
\begin{equation}
\|\tilde \mu(\cdot,k)-\psi_0/c_{\infty}\|_{W^{1,\tilde p}_{-\beta}}\leq c|k|^\epsilon
\label{eq:muconv}
\end{equation}
 for small $k$. Since $I+K(k)$ is invertible on $W^{1,\tilde p}$ for $k\neq 0$, we use the resolvent equation to give
\[(I+K(k))^{-1}=(I+\tilde K(k))^{-1}+(I+\tilde K(k))^{-1}[\tilde K(k)-K(k)](I+K(k))^{-1} \]
Here $[\tilde K(k)-K(k)]f=(\log|k|+\gamma)/2\pi\ast (qf)$. Applying both sides of the operator to $\langle x\rangle^{-\beta}$ we find
\begin{equation}
\mu(x,k)=(1+\ell(k)\tau(k))\tilde \mu(x,k)
\label{eq:mutilde}
\end{equation}
where
\[\tau(k):=\int_{\mathbb R^2}q(x)\mu(x,k)dx.\]
Integrating equation \eqref{eq:mutilde} against $q$ gives 
\[\tau(k)=(1+\ell(k)\tau(k))\tilde \tau(k)\]
 and solving for $\tau(k)$ gives us 
\[\tau(k)=\frac{\tilde \tau(k)}{1-\ell(k)\tilde\tau(k)}.\] Using the small-$k$ estimate for $\tilde\mu(x,k)$, we have
\[|\tilde \tau(k)|\leq \left|\int q(\tilde\mu(x,k)-\psi_0/c_\infty) dx\right|+\left|\int q\psi_0/c_\infty dx\right| \leq 2\pi a/c_\infty +c|k|^{\epsilon},\]
where we use the fact that $\int_{\mathbb R^2}q\psi_0 = 2\pi a$. Here $a$ is the coefficient to the logarithm in equation \eqref{eq:subcrit-asymp}, which can be obtained from equation \eqref{eq:psiconstants}.
Therefore, for small $k$,
\[\tau(k)=\frac{2\pi a/c_\infty+O(k^\epsilon)}{1-(\log|k|+\gamma) a/c_\infty+O(k^{\epsilon})}=\frac{2\pi a}{c_\infty-(\log|k|+\gamma) a}+O(k^\epsilon).\]
To finish the case when $c_\infty\neq 0$, note that for small $k$, we have $\tau(k)=\mathbf{t}(k)+O(k)$.

Now we consider the case when $c_\infty = 0$. Combining the result for $c_\infty\neq 0$ with the scaling properties from Lemma \ref{lem:scale}, the asymptotics for an arbitrarily scaled potential $q_r(x)$ are
\[\mathbf{t}_r(k)=\frac{2\pi a}{c^r_\infty-(\log|k|+\gamma) a}+O(k^\epsilon).\]
Using the relation $\mathbf{t}(k)=\mathbf{t}_r(rk)$ we get
\begin{align*}
\mathbf{t}(k)&=\frac{2\pi a}{c^r_\infty-(\log|rk|+\gamma) a}+O(k^\epsilon)\\
&=\frac{2\pi a}{(c^r_\infty-a\log|r|)-(\log|k|+\gamma) a}+O(k^\epsilon)\\&=\frac{2\pi a}{-(\log|k|+\gamma) a}+O(k^\epsilon).
\end{align*}
\end{proof}


\section{Large-$k$ asymptotics of the scattering transform and $\mu(x,k)$}

In order to prove Theorem \ref{thm:inv-scat}, we show that when $q$ is $n$ times weakly differentiable $\mu(x,k)$ will be $(n+1)$ times differentiable in $x$ and $(\mu(x,k)-1)$ will vanish in norm as $|k|\rightarrow \infty$. We then use this to show that $|k|^n{\bf t}(k)/\bar k\in L^r(|k|>k_0)$ for $r>\tilde p'$. With these two facts in hand, equation \eqref{eq:dbar-k-mu} implies that $\mu(x,k)$ has a large-$k$ expansion. The derivative of the first term in this expansion is $\mathcal Q$. Taking the $L^r(\mathbb C)$ limit as $|k|\to \infty$ and using equation \eqref{eq:dbar-z} then proves Theorem \ref{thm:inv-scat}. Siltanen proved a variation on the following lemmas on the decay of $\mu$ and ${\bf t}$ for compactly supported conductivity type potentials in section 3.2.1 of \cite{siltanen}.

\begin{lem} 
If $q\in W^{n,p}(\mathbb R^2)$ for $1<p<2$ has no exceptional points then $\mu(\cdot,k)-1\in W^{n+1,\tilde p}(\mathbb R^2)$ and 
\begin{equation}
\|D^{\alpha}(\mu(\cdot,k)-1)\|_{W^{s,\tilde p}(\mathbb R^2)}\leq \frac{c}{|k|^{1-s}}\sum_{m=0}^n\|q\|_{W^{m,p}(\mathbb R^2)} \|q\|_{L^2(\mathbb R^2)}^{n-m}
\label{eq:mudiff}
\end{equation}
for $0\leq s\leq 1$, $|\alpha|\leq n$, $c(n)$, and $k>k(\|q\|_{W^{n,p}},n)$.
\label{lem:mu-diff-forward}
\end{lem}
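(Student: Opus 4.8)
The plan is to prove the estimate \eqref{eq:mudiff} by induction on $n$, where the base case $n=0$ is essentially Nachman's estimate \eqref{eq:schro-decay} combined with the mapping properties of Lemma \ref{lem:k-decay}. The starting point is the rearranged Schr\"odinger equation \eqref{eq:dbar-z}, namely $\bar\partial(\partial+ik)\mu = q\mu/4$, which expresses $\mu-1$ as the solution of the $\bar\partial$-problem \eqref{eq:partialk-u} with right-hand side $f = q\mu/4$. Lemma \ref{lem:k-decay} then gives the inversion bound $\|\mu-1\|_{W^{s,\tilde p}} \leq c|k|^{s-1}\|q\mu/4\|_p$, so the entire argument reduces to controlling the $L^p$-norm (and higher Sobolev norms) of $q\mu$.

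For the inductive step, I would differentiate \eqref{eq:dbar-z}. Since the operators $\bar\partial$ and $(\partial+ik)$ commute with the constant-coefficient derivative $D^\alpha$, applying $D^\alpha$ with $|\alpha|\leq n$ yields
\begin{equation*}
\bar\partial(\partial+ik)D^\alpha\mu = \tfrac14 D^\alpha(q\mu) = \tfrac14\sum_{\beta\leq\alpha}\binom{\alpha}{\beta}(D^\beta q)(D^{\alpha-\beta}\mu).
\end{equation*}
Lemma \ref{lem:k-decay} applied to $f = \tfrac14 D^\alpha(q\mu)$ then controls $\|D^\alpha\mu\|_{W^{s,\tilde p}}$ by $c|k|^{s-1}\|D^\alpha(q\mu)\|_p$, provided the right-hand side lies in $L^p$. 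The task is to bound each term $\|(D^\beta q)(D^{\alpha-\beta}\mu)\|_p$: the factor $D^\beta q$ lives in $W^{n-|\beta|,p}$ by hypothesis, and $D^{\alpha-\beta}\mu$ (equivalently $D^{\alpha-\beta}(\mu-1)$ for $|\alpha-\beta|\geq 1$, plus a harmless constant when $\beta=\alpha$) is controlled at lower differentiation order by the inductive hypothesis, which furnishes $\|D^{\alpha-\beta}(\mu-1)\|_{W^{s,\tilde p}}$. Combining these through H\"older and the Sobolev embedding $W^{1,\tilde p}\hookrightarrow L^r$ produces a product estimate that reassembles into the claimed sum $\sum_{m=0}^n \|q\|_{W^{m,p}}\|q\|_{L^2}^{n-m}$, with the $\|q\|_{L^2}$ powers arising from repeatedly pairing undifferentiated copies of $q$ against $\mu$-factors in the intermediate $L^2$-type estimates.

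The main obstacle is the bookkeeping of the mixed H\"older exponents: when multiplying $D^\beta q$ (in some $L^{p_1}$) against $D^{\alpha-\beta}\mu$ (in some $L^{p_2}$), I must verify that the exponents can always be arranged so that the product sits in $L^p$ with $1<p<2$, while the $\mu$-factors are measured in the scale-invariant spaces $W^{s,\tilde p}$ where Lemma \ref{lem:k-decay} operates. The delicate point is that pure $L^{\tilde p}$-control of the top derivative $D^{\alpha-\beta}(\mu-1)$ does not by itself close against a low-integrability $q$; one needs to interpolate and to exploit that $\mu-1$ gains integrability (indeed $\mu-1\in W^{1,\tilde p}\hookrightarrow L^\infty$ via Lemma \ref{lem:dbar-estimates}, part 3, when paired with $L^2$ decay) so that undifferentiated $\mu$-factors can be absorbed in $L^\infty$ while the decaying $\|q\|_{L^2}$ factors supply the correct homogeneity.

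Finally, uniformity in $k$ must be tracked: every invocation of Lemma \ref{lem:k-decay} contributes exactly one factor $|k|^{s-1}$ for the top-order term, and the lower-order $\mu$-factors entering through the inductive hypothesis carry their own negative powers of $|k|$, which only improve the decay. Since the statement asks only for the bound with a single $|k|^{s-1}$ prefactor for the top derivative, these extra decaying powers for $|k|\geq k(\|q\|_{W^{n,p}},n)$ can be bounded by a constant and absorbed into $c(n)$, so the threshold $k(\|q\|_{W^{n,p}},n)$ is exactly what guarantees the Neumann-series inversion underlying Lemma \ref{lem:k-decay} converges at each order. The regularity conclusion $\mu(\cdot,k)-1\in W^{n+1,\tilde p}$ then follows from the fact that \eqref{eq:dbar-z} trades the second-order operator $\bar\partial(\partial+ik)$ for two orders of smoothing, so $n$ derivatives on $q\mu$ yield $n+1$ derivatives on $\mu$.
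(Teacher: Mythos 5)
Your proposal is correct and follows essentially the same route as the paper's proof: induction on the order of differentiation, a Leibniz expansion of $D^{\alpha}(q\mu)$ in the differentiated equation \eqref{eq:dbar-z}, Lemma \ref{lem:k-decay} to produce the $|k|^{s-1}$ factor, H\"older plus Sobolev embedding for the cross terms, and a large-$k$ threshold to close the argument. One point to state more carefully (the paper is equally terse here): the $\beta=0$ term $q\,D^{\alpha}\mu$ is of top order and hence \emph{not} controlled by the inductive hypothesis as your second paragraph suggests; it is handled precisely by the absorption/Neumann-series mechanism you invoke at the end, since for $|k|>k(\|q\|_{W^{n,p}},n)$ the operator $u\mapsto g_k\ast(qu/4)$ has small norm on $L^{\tilde p}$, which is exactly what forces the stated restriction on $k$.
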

\begin{proof}
The case $n=0$ is Nachman's Theorem 1.1. We induct on this using Lemma \ref{lem:k-decay}. Take derivatives in equation \eqref{eq:dbar-z} with $u=(\mu-1)$, $f=q(\mu-1)/4+q/4$, and assume the result for all multi-indices less than $\alpha$. We have $D^{\alpha}(\mu-1)\in L^{\tilde p}(\mathbb R^2)$ solves equation \eqref{eq:dbar-z} with 
\begin{align}
f&=\frac{1}{4}\left[D^\alpha q+qD^{\alpha}(\mu-1)+\sum_{\beta:0<\beta< \alpha} \binom{\alpha}{\beta}D^{\beta} q D^{\alpha-\beta} (\mu-1)\right]\\
&=\frac{1}{4}(I+II+III)
\label{eq:threeparts}
\end{align}
We estimate the asymptotic behavior using the three parts in equation \eqref{eq:threeparts}:
\begin{align*}
\|D^\alpha( \mu(\cdot,k,0)-1)\|_{W^{s,\tilde p}}&\leq \frac{\tilde c}{|k|^{1-s}}\|f\|_{L^p}\\
&\leq \frac{\tilde c}{4|k|^{1-s}}(\|I\|_{L^p}+\|II\|_{L^p}+\|III\|_{L^p}).
\end{align*}
The norm $\|I\|_{L^p}\leq\|q\|_{W^{n,p}}$ is independent of $k$ and already accounted for in inequality \eqref{eq:mudiff}. For the second norm we use the induction hypothesis with 
\[\|II\|_{L^p}=\|qD^\alpha(\mu-1)\|_{L^p}\leq \|q\|_{L^2}\|D^\alpha(\mu-1)\|_{L^{\tilde p}}\leq c\sum_{m=0}^{n-1}\|q\|_{W^{m,p}}\|q\|_{L^2}^{n-m}.\]
Now we show the third norm decreases faster in $k$ then the other terms, and therefore it will not contribute to the asymptotic behavior. Choosing $2/\tilde p<s_0<1$,
\begin{align*}
\|III\|&\leq \sum_{\beta:0<\beta< \alpha} \binom{\alpha}{\beta}\|D^{\beta} q D^{\alpha-\beta} (\mu-1)\|_{L^p}\\
&\leq \sum_{\beta:0<\beta<\alpha} \binom{\alpha}{\beta}\|D^{\beta} q\|_{L^p}\| D^{\alpha-\beta} (\mu-1)\|_{W^{s_0,\tilde p}}\\
&\leq \frac{c}{|k|^{1-s_0}} \sum_{\beta:0<\beta< \alpha} \binom{\alpha}{\beta}\|D^{\beta} q\|_{L^p}\| q\|_{L^p}^{n}.
\end{align*}
Choosing $k$ large enough finishes the result.
\end{proof}
In equation \eqref{eq:dbar-k-mu}, we will set  $s(k):={\bf t}(k)/(4\pi \bar k)$.
\begin{lem}
If $q\in W^{n,p}_\rho(\mathbb R^2)$ with $\rho>2/p'$ and $1<p<2$ has no exceptional points then $|k|^ns(k)\in L^{r}(|k|>k_0)$ for all $r>\tilde p'$ and large enough $k_0$. Additionally, $s(k)$ is continuous on $\mathbb C\setminus\{0\}$, and $s(k)\in L^2(\mathbb C)$.
\label{lem:sk-decay}
\end{lem}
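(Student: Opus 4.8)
The plan is to extract the decay of $\mathbf t(k)$ from the regularity of $q$ by integrating by parts in the oscillatory integral defining $\mathbf t$, and then feed the resulting pointwise bounds into a Hölder/Sobolev argument built on Lemma \ref{lem:mu-diff-forward}. Writing $\psi=e^{ikx}\mu$ in \eqref{eq:scat} gives
\[\mathbf t(k)=\int_{\mathbb R^2}e^{i(kx+\bar k\bar x)}q(x)\mu(x,k)\,dx,\]
and since $\bar\partial e^{i(kx+\bar k\bar x)}=i\bar k\,e^{i(kx+\bar k\bar x)}$ (derivative in $x$), each integration by parts in $\bar x$ produces a factor $\bar k^{-1}$ and moves one $\bar\partial$ onto $q\mu$. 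The decay of $q\mu$ and of its first $n$ derivatives makes the boundary terms vanish, so after $n$ integrations by parts
\[\mathbf t(k)=\frac{(-1)^n}{(i\bar k)^n}\int_{\mathbb R^2}e^{i(kx+\bar k\bar x)}\,\bar\partial^n(q\mu)\,dx.\]
Since $\bigl||k|^n s(k)\bigr|=\frac{1}{4\pi|k|}\bigl|\int e^{i(kx+\bar k\bar x)}\bar\partial^n(q\mu)\,dx\bigr|$, it suffices to place this last integral, divided by $|k|$, in $L^r(|k|>k_0)$ for every $r>\tilde p'$.

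I would expand $\bar\partial^n(q\mu)$ by the Leibniz rule and split $\mu=1+(\mu-1)$. The single purely-$q$ term $\bar\partial^n q\cdot 1$ produces the Fourier transform $\widehat{\bar\partial^n q}$ at a linear image of $k$; the weight hypothesis $\rho>2/p'$ gives $\langle x\rangle^{-\rho}\in L^{p'}$, hence $\bar\partial^n q\in L^1\cap L^p$, so $\widehat{\bar\partial^n q}$ is bounded (and vanishes at infinity) and lies in $L^{p'}$ by Hausdorff--Young. Multiplying by $|k|^{-1}$ and interpolating between the $L^{p'}$ and $L^\infty$ bounds places this term in $L^r(|k|>k_0)$ for all $r>\tilde p'$. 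Every remaining Leibniz term has the form $\bar\partial^j q\,\bar\partial^{n-j}(\mu-1)$ with $0\le j\le n$; using the Sobolev embedding $W^{\sigma,\tilde p}\hookrightarrow L^{b}$ with $1/b=1/\tilde p-\sigma/2$ together with \eqref{eq:mudiff},
\[\Bigl|\int e^{i(kx+\bar k\bar x)}\bar\partial^j q\,\bar\partial^{n-j}(\mu-1)\,dx\Bigr|\le \|\bar\partial^j q\|_{L^{b'}}\,\|\bar\partial^{n-j}(\mu-1)\|_{L^{b}}\le \frac{c}{|k|^{1-\sigma}},\]
where for $\sigma$ just below $2/\tilde p$ the exponent $b'$ is close to $1$, so $\bar\partial^j q\in L^1\cap L^p\subset L^{b'}$. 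After dividing by $|k|$ these terms are dominated by $|k|^{-(2-\sigma)}$, and $|k|^{-(2-\sigma)}\in L^r(|k|>k_0)$ exactly when $r>2/(2-\sigma)$; since $2/(2-\sigma)\to\tilde p'$ as $\sigma\uparrow 2/\tilde p$, choosing $\sigma$ close enough to $2/\tilde p$ covers every $r>\tilde p'$. This yields $|k|^n s(k)\in L^r(|k|>k_0)$.

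For the remaining assertions, continuity of $s(k)$ on $\mathbb C\setminus\{0\}$ follows from continuity of $k\mapsto\mu(\cdot,k)$ (there are no exceptional points) and dominated convergence in the integral for $\mathbf t$, since $\bar k\ne 0$ there. For $s\in L^2(\mathbb C)$ I would split $\mathbb C$ into three regions. On $|k|>k_0$ the case $n=0$ of the estimate just proved (valid since $q\in L^p_\rho$) gives $s\in L^r(|k|>k_0)$ for all $r>\tilde p'$, and $\tilde p'=2p/(3p-2)<2$ for $1<p<2$ includes $r=2$. On a compact annulus $k_1\le|k|\le k_0$ continuity gives boundedness, hence $L^2$. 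Near the origin I would invoke Theorem \ref{thm:tk-small}, which shows $\mathbf t(k)=O(1/|\log|k||)$, so $|s(k)|\lesssim(|k|\,|\log|k||)^{-1}$; in polar coordinates $\int_{|k|<k_0}(|k|\,|\log|k||)^{-2}\,dk=2\pi\int_0^{k_0}(r(\log r)^2)^{-1}\,dr<\infty$ for $k_0<1$, which is precisely where the logarithmic small-$k$ behavior is needed.

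The main obstacle is the bookkeeping in the second step: matching the integrability threshold exactly to $\tilde p'$. The decay rate $|k|^{-(1-\sigma)}$ improves as $\sigma\downarrow 0$, but the Hölder pairing then demands $\bar\partial^j q\in L^{b'}$ with $b'$ too large for the available $L^1\cap L^p$ bound, whereas taking $\sigma$ near $2/\tilde p$ (so $b'\to 1$) degrades the decay to the borderline rate $2/p'$. The computation shows these competing constraints meet exactly at $r=\tilde p'$, so nothing is lost; making this precise, and carefully justifying the vanishing of the boundary terms in the repeated integration by parts using the decay from Lemma \ref{lem:mu-diff-forward}, is the technical heart of the argument.
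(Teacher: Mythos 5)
Your proposal is correct and follows essentially the same route as the paper's proof: integrate by parts $n$ times to move $\bar\partial^n$ onto $q\mu$, handle the pure-$q$ term by Hausdorff--Young, bound the mixed Leibniz terms via H\"older together with the decay estimate \eqref{eq:mudiff}, and get $s\in L^2(\mathbb C)$ by combining the large-$k$ decay with continuity on compact annuli and the logarithmic small-$k$ behavior from Theorem \ref{thm:tk-small}. The only cosmetic difference is your Sobolev exponent $\sigma$ taken just below $2/\tilde p$ (pairing $L^{b}$ against $L^{b'}$), where the paper takes $s_0$ just above $2/\tilde p$ (pairing $L^\infty$ against $L^1$); both choices produce the same $|k|^{s-2}$ decay and the same threshold $\tilde p'$.
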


\begin{proof}
First note that $W^{n,p}_\rho(\mathbb R^2)\subset W^{n,1}\cap W^{n,p}(\mathbb R^2)$ when $\rho>1$. 
Denote the Fourier transform by $\mathcal F(g)(k)=\int e_{-k}(x)g(x)\,dx$. Then we may write 
\[\bar k^{-1}{\bf t}(k) = \bar k^{-1}\mathcal F(q)(k)+\bar k^{-1}\int e_{-k}(x)q(x)(\mu(x,k)-1)\,dx.\]
 By the Hausdorff-Young inequality and the differentiability of $q\in W^{n,1}\cap W^{n,p}(\mathbb R^2)$, the first term satisfies $|k|^{n+1}\bar k^{-1}\mathcal F(q)(k)\in L^{p'}\cap L^\infty(\mathbb R^2)$.
For the second term, we may integrate by parts because $e_{-k}(x)\in W^{n,\infty}(\mathbb R^2)$ for any $n$ and the product $q(\mu-1)\in W^{n,1}(\mathbb R^2)$ by Lemma \ref{lem:mu-diff-forward}. Therefore we have
\begin{align*}
(i\bar k)^n\int e_{-k}(x)q(x)(\mu(x,k)-1)dx&=\int (-1)^n [\bar\partial^n e_{-k}(x)]q(x)(\mu(x,k)-1)dx\\
&=\int e_{-k}(x)\left(\sum_{j=0}^n \binom{n}{j}\bar\partial_x^j(\mu-1)\bar\partial_x^{n-j}q\right)dx.
\end{align*}
Using the decay estimate from Lemma \ref{lem:mu-diff-forward} with $s_0>2/\tilde p$, we get for $k$ large,
\begin{align*}
|k|^n |\bar k^{-1} (t(k)-\mathcal F(q)(k))|&\leq |k|^{-1}\left( \sum_{j=0}^n \binom{n}{j}\|\bar\partial_z^j(\mu-1)\|_{W^{s_0,\tilde p}}\|\bar\partial_z^{n-j} q\|_{L^1}\right)\\
&\leq \frac{c}{|k|^{2-s_0}}\left(\sum_{m=0}^n\|q\|_{W^{m,p}} \|q\|_{L^2}^{n-m}\right)\|q\|_{W^{n,1}}.
\end{align*}
With this $|k|^{s_0-2}$ decay, we get the second term is in $L^r(|k|>k_0)$ if $(2-s_0)r>2$ which is $r>\frac{2}{2-2/\tilde p}=\tilde p'$. Continuity of ${\bf t}(k)$ follows in the same way as that for conductivity type potentials. Nachman proves this in Theorem 4 of \cite{nachman}. Using the continuity and the small $k$ behavior of ${\bf t}$ from theorem \ref{thm:tk-small} implies $\bar k^{-1}{\bf t}(k)\in L^2(|k|<\epsilon)$ for small enough $\epsilon$. 
\end{proof}
 
In the next two lemmas, we only need the results for equation \eqref{eq:dbar-k-mu} without time included. However, under the flow for the Novikov-Veselov equation, the scattering transform for later times $t$ becomes $e^{it(\bar k^3+k^3)}{\bf t}(k)$. This extra phase does not change the $L^p$ space properties of ${\bf t}(k)$, but proving the results in this generality will allow their use in studying the Novikov-Veselov equation at times $t\neq 0$. We therefore define the exponent $\phi(x,k,t)=(kx+\bar k\bar x)+t(k^3+\bar k^3)$. The evolved $\bar\partial_k$ equation for $\mu(x,k,t)$ then can be written as
\begin{equation}
\begin{cases}
\bar\partial_k\mu(x,k,t)=e^{i\phi}s(k)\overline{\mu(x,k,t)}\\
\mu(x,\cdot,t)-1\in L^r(\mathbb C)
\end{cases}
\label{eq:dbar-k-with-t}
\end{equation}
for some $r>2$.  The operator we must study is then $T_x g(k)= \bar\partial_k^{-1}(e^{i\phi}s(k)\overline{g})(k)$. We would then have that $(\mu-1)$ solves the integral equation $(\mu-1)=T_x 1 + T_x(\mu-1)$. Inverting the operator would then yield $(\mu-1)=[I-T_x]^{-1}T_x 1$. The fact that the operator $T_x$ is compact on $L^p$ for $p>2$ can be found in the preprint to Nachman's 1996 paper Lemma 4.2 \cite{nachmanpreprint}. We reproduce it here for the reader's convenience. 

\begin{lem}
If $s(k)\in L^2$ then the operator $T=\bar\partial_{k}^{-1}( s(k) \bar{\,\cdot\,})$ is compact on $L^p$ for all $2<p<\infty$.
\label{lem:Tcompact}
\end{lem}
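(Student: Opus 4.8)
The plan is to realize $T$ as an operator-norm limit of compact operators. First I would record boundedness on $L^p$ for $2<p<\infty$: setting $1/r=1/2+1/p$, H\"older's inequality gives $\|s\bar g\|_{L^r}\le \|s\|_{L^2}\|g\|_{L^p}$ with $1<r<2$, and since $1/r-1/2=1/p$, part~1 of Lemma \ref{lem:dbar-estimates} yields $\|\bar\partial^{-1}(s\bar g)\|_{L^p}\le\|s\bar g\|_{L^r}\le\|s\|_{L^2}\|g\|_{L^p}$. The same computation with $s-s_n$ in place of $s$ shows that if $s_n\to s$ in $L^2$ then the associated operators $T_n=\bar\partial^{-1}(s_n\,\overline{\,\cdot\,})$ converge to $T$ in operator norm, with $\|T-T_n\|_{L^p\to L^p}\le\|s-s_n\|_{L^2}$.

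Next I would approximate $s$ in $L^2$ by bounded, compactly supported functions $s_n$ (for instance $s_n=s\,\mathbf 1_{\{|k|\le n,\ |s(k)|\le n\}}$), so that $\|s-s_n\|_{L^2}\to 0$. Since a norm limit of compact operators is compact, it then suffices to prove that each $T_n$ is compact.

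To prove $T_n$ is compact, fix a bounded sequence $(g_j)$ in $L^p$ and set $h_j=s_n\overline{g_j}$. Because $s_n$ is bounded with support in a fixed compact set $K$, the functions $h_j$ are supported in $K$ and uniformly bounded in $L^{r_0}$ for every $1\le r_0\le p$; in particular in $L^1(\mathbb C)$ and in $L^p(\mathbb C)$. On any ball $B_R\supset K$ I would use that $\partial\bar\partial^{-1}$ is the Beurling transform, bounded on $L^p$ \cite{astala}, together with $\bar\partial\bar\partial^{-1}=\mathrm{Id}$, to conclude that $(\bar\partial^{-1}h_j)$ is bounded in $W^{1,p}(B_R)$; since $p>2$, the Rellich--Kondrachov theorem then furnishes a subsequence converging in $L^p(B_R)$. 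For the tails, for $|x|>2\,\mathrm{diam}(K)$ the pointwise bound $|\bar\partial^{-1}h_j(x)|\le C|x|^{-1}\|h_j\|_{L^1}$ holds uniformly in $j$, and since $p>2$ we have $\int_{|x|>R'}|x|^{-p}\,dx\to 0$ as $R'\to\infty$; hence the $L^p$ mass of $(\bar\partial^{-1}h_j)$ outside large balls is uniformly small. A diagonal argument over $R\to\infty$ combines these two estimates into a single subsequence of $(T_n g_j)$ convergent in $L^p(\mathbb C)$, so $T_n$ is compact.

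The main obstacle I expect is precisely this compactness of the individual $T_n$: one must control $\bar\partial^{-1}h_j$ simultaneously near $K$, where Rellich compactness applies but the Cauchy transform does not preserve support, and at infinity, where the tightness estimate together with the hypothesis $p>2$ is essential. Reconciling these two regimes through the diagonal argument is the crux; the operator-norm approximation step is then routine given the H\"older/Hardy--Littlewood--Sobolev bound above.
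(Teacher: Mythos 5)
Your proof is correct, but it takes a genuinely different route from the paper's. The paper passes to the dual operator $f\mapsto s\,\bar\partial_k^{-1}f$ acting on $L^q$, $1<q<2$, and transfers compactness back by Schauder's theorem: for smooth, compactly supported $s$ this dual operator maps $L^q$ boundedly into $W^{1,q}$ functions supported in the fixed compact set $\mathrm{supp}\, s$ --- multiplication by $s$ is the \emph{last} operation performed, so the image is automatically compactly supported and Rellich--Kondrachov applies at once, with no analysis at infinity. In your argument the Cauchy transform is the last operation, so the functions $T_n g_j=\bar\partial^{-1}(s_n\overline{g_j})$ are not compactly supported, and you must compensate with the pointwise decay $|\bar\partial^{-1}h_j(x)|\le C|x|^{-1}\|h_j\|_{L^1}$ far from $K$, the integrability of $|x|^{-p}$ at infinity (this is exactly where $p>2$ enters), and a diagonal argument. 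What your route buys: you avoid duality altogether --- a small but real convenience, since $T$ is conjugate-linear, so invoking Schauder strictly speaking requires factoring out the conjugation --- and your approximants $s_n$ need only be bounded with compact support (truncation suffices, no mollification), because the derivative in your local $W^{1,p}$ bound falls on $\bar\partial^{-1}$ via $\bar\partial\bar\partial^{-1}=\mathrm{Id}$ and the Beurling transform, rather than on the symbol, whereas the paper's $W^{1,q}$ estimate spends $\|\nabla s\|_{L^2}$ and hence needs smooth approximants. What the paper's route buys: compact support of the image makes the compactness step a one-line application of Rellich--Kondrachov, with no tail estimate and no diagonalization. Both proofs share the same outer skeleton --- the H\"older/Hardy--Littlewood--Sobolev bound giving $\|T-T_n\|_{L^p\to L^p}\le c\|s-s_n\|_{L^2}$, plus the fact that an operator-norm limit of compact operators is compact --- so the difference is concentrated in how each of you makes Rellich--Kondrachov applicable.
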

\begin{proof}
We will prove the result for the dual operator $s(k)\bar\partial^{-1}_k$ on $L^{q}$ for $1<q<2$. First, we have that the operator is bounded on $L^p$. Take $f\in L^q(\mathbb C)$, then by the Hardy-Littlewood-Sobolev inequality with $1/\tilde q=1/q-1/2$
\begin{equation}
\|s(k)\bar\partial_k^{-1}f(\cdot))\|_{L^q(\mathbb C)} \leq \|s(\cdot)\|\|\bar\partial_k^{-1}f(\cdot)\|_{L^{\tilde q}} \leq c \|s\|_{L^2}\|f\|_{L^q}.
\label{eq:Tcompact}
\end{equation}
Now we assume that $s(k)$ is continuous with compact support. We first have $\partial \bar\partial^{-1} f\in L^q$ from the theory of Calderon-Zygmund operators, and using this
\[ \|\nabla s \bar\partial^{-1}_k f \|_{L^q}\leq \|\nabla s\|_{L^2} \|\bar\partial^{-1}_k f\|_{L^{\tilde q}} + \|s\|_{L^\infty} \|\nabla \bar\partial^{-1}_kf\|_{L^q} \leq c\|f\|_{L^q}.\]
Thus we have $\|s \bar\partial^{-1}_k f\|_{W^{1,q}}\leq c\|f\|_{L^q}$, and since $s$ has compact support, we can use Rellich-Kondrachov compactness to show $s\bar\partial^{-1}$ is a compact operator on $L^q$. For $s$ a general function in $L^2$, we may approximate any $T$ by these compact operators and use inequality \eqref{eq:Tcompact} to show $T$ is compact on $L^q$.
\end{proof}

\begin{lem}
If $s(k) \in L^{r}\cap L^2(\mathbb C)$ for some $1<r<2$ then the operator $I-T_x$ is invertible on $L^{\tilde r}(\mathbb C)$ and there is a unique solution of equation \eqref{eq:dbar-k-with-t} with $\mu(x,\cdot,t)-1\in L^{\tilde r}(\mathbb C)$.
\end{lem}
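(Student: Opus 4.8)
The plan is to recast equation \eqref{eq:dbar-k-with-t} as a second-kind integral equation and invert it by the Fredholm alternative. Writing $w=\mu-1$ and using $\bar\partial_k 1=0$, the $\bar\partial_k$-equation becomes $\bar\partial_k w = e^{i\phi}s(k)(\overline w+1)$, and applying $\bar\partial_k^{-1}$ turns this into
\[(I-T_x)w = T_x 1,\]
where $T_x g=\bar\partial_k^{-1}(e^{i\phi}s\,\overline g)$ and $T_x 1=\bar\partial_k^{-1}(e^{i\phi}s)$. Thus it suffices to show that $I-T_x$ is invertible on $L^{\tilde r}(\mathbb C)$ and that the right-hand side $T_x 1$ lies in $L^{\tilde r}(\mathbb C)$. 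The latter is immediate: since $|e^{i\phi}|=1$ we have $e^{i\phi}s\in L^r$ with $1<r<2$, so part 1 of Lemma \ref{lem:dbar-estimates} (Hardy--Littlewood--Sobolev) gives $T_x 1\in L^{\tilde r}(\mathbb C)$ with $1/\tilde r=1/r-1/2$.

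For invertibility I would first note that $T_x$ is compact on $L^{\tilde r}(\mathbb C)$. Indeed, $T_x$ has exactly the form of the operator in Lemma \ref{lem:Tcompact} with $s$ replaced by $e^{i\phi}s$, and because the phase has unit modulus, $\|e^{i\phi}s\|_{L^2}=\|s\|_{L^2}<\infty$; since $\tilde r>2$, Lemma \ref{lem:Tcompact} applies verbatim. By the Fredholm alternative, $I-T_x$ is invertible on $L^{\tilde r}(\mathbb C)$ as soon as it is injective.

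The heart of the argument, and the step I expect to be the main obstacle, is the injectivity. Suppose $g\in L^{\tilde r}(\mathbb C)$ satisfies $(I-T_x)g=0$; applying $\bar\partial_k$ shows $g$ solves the Vekua-type equation $\bar\partial_k g = e^{i\phi}s\,\overline g$, which is of the form \eqref{eq:vekua} with $a=0$ and $b=e^{i\phi}s$. Here the two hypotheses on $s$ pay off: $|b|=|s|\in L^2$, so $b\in L^2(\mathbb C)$, and since $\tilde r>2$ we have both $g\in L^{\tilde r}$ with $\tilde r<\infty$ and $g\in L^{\tilde r}_{\mathrm{loc}}\subset L^2_{\mathrm{loc}}$. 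Corollary \ref{cor:bu-liouville} then forces $g\equiv 0$, so $I-T_x$ is injective. The delicate point is precisely checking that the kernel element sits in the exact function space required by the Brown--Uhlmann Liouville theorem; the choice $\tilde r>2$ is what makes the local $L^2$ condition automatic while keeping the global exponent finite.

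Finally I would assemble the pieces. Invertibility yields the unique $w=(I-T_x)^{-1}T_x 1\in L^{\tilde r}(\mathbb C)$, and setting $\mu=1+w$ produces a solution of \eqref{eq:dbar-k-with-t}. Uniqueness among all solutions with $\mu-1\in L^{\tilde r}$ follows because the difference of two such solutions lies in the kernel of $I-T_x$, which is trivial. It remains to confirm that the integral equation and the $\bar\partial_k$-equation are genuinely equivalent: the forward direction is a direct application of $\bar\partial_k$, while for the reverse one checks that $e^{i\phi}s\,\overline\mu\in L^{q}$ for some $q<2$ (by H\"older, after splitting $\overline\mu=\overline w+1$), so that $(\mu-1)-\bar\partial_k^{-1}(e^{i\phi}s\,\overline\mu)$ is an entire function lying in an $L^{\tilde r}$-type space and hence vanishes identically.
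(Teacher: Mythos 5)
Your proposal is correct and follows essentially the same route as the paper: compactness of $T_x$ from Lemma \ref{lem:Tcompact} (using $\|e^{i\phi}s\|_{L^2}=\|s\|_{L^2}$), injectivity---and hence invertibility, by the Fredholm alternative---via the Brown--Uhlmann Liouville theorem, and construction of the solution as $\mu=1+(I-T_x)^{-1}T_x 1$ with $T_x 1\in L^{\tilde r}(\mathbb C)$ by Hardy--Littlewood--Sobolev. Your additional care in checking the $L^2_{\mathrm{loc}}$ hypothesis of Corollary \ref{cor:bu-liouville} and in verifying the equivalence between the integral equation and \eqref{eq:dbar-k-with-t} (via the vanishing of an entire function in $L^{\tilde r}$) only fills in details the paper leaves implicit.
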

\begin{proof}
From Lemma \ref{lem:Tcompact}, the operator $T_x$ is compact on $L^{r_1}$ for all $2<r_1<\infty$ so $I-T_x$ is Fredholm. Assume $h\in L^{r_1}(\mathbb C)$ solves $(I-T_x)h=0$, then $\bar\partial_k h=e^{i\phi}s(k)\bar h$. Using Brown and Uhlmann's Liouville theorem with the coefficient $e^{i\phi}s(k)\in L^2(\mathbb C)$ we get $h\equiv 0$. Thus $I-T_x$ is invertible on $L^{r_1}(\mathbb C)$.

To construct a solution, we note that formally
\[\mu(x,k,t)-1=[I-T_x]^{-1}T_x 1.\]
We have $T_x 1 = \bar\partial^{-1}[e^{i\phi}s(k)]$ and using the Hardy-Littlewood-Sobolev inequality $T_x 1 \in L^{\tilde r}(\mathbb C)$. Thus $\mu=1+[I-T_x]^{-1}T_x 1$ solves \eqref{eq:dbar-k-with-t}. 
\end{proof}
Equation \eqref{eq:dbar-k-with-t} is conjugate linear, so we will not prove differentiability of $\mu(x,k,t)$ in the $x$ variable by looking at the $\bar\partial_x$ and $\partial_x$ derivatives. Instead we will take real derivatives in the $x=(x_1,x_2)$ variables written as 
\[D^\alpha_x = \left(\frac{\partial}{\partial x_1}\right)^{\alpha_1}\left(\frac{\partial}{\partial x_2}\right)^{\alpha_2}\]  
where $\alpha =(\alpha_1,\alpha_2)$ is a multi-index.
\begin{lem}
If $q\in W^{n,p}_\rho(\mathbb R^2)$ with $1<p<2$, $\rho>1$ is critical or subcritical then the unique solution $\mu(x,\cdot,t)-1\in L^r(\mathbb C)$ for $p'<r<\infty$, of equation \eqref{eq:dbar-k-with-t} is $\alpha$  times differentiable in $x$ and $m$ times differentiable in $t$ for $(3m+|\alpha|)\leq n$. Additionally, the derivatives of the map $(x,t)\to\mu(x,\cdot,t)\in L^r(\mathbb C)$ satisfy $\partial_t^m D^{\alpha}_{x}\mu(x,\cdot,t)\in L^r(\mathbb C)$. The derivatives are given by
\[\partial_t^m D^{\alpha}_{x} \mu(x,k,t)=[I-T_x]^{-1}\bar\partial^{-1}[s(k)f(x,k,t)] \]
where
\[f(x,k,t)=\partial_t^mD^{\alpha}_{x}[e^{i\phi}\overline{\mu(x,k,t)}]-e^{i\phi}\partial_t^mD^\alpha_x \overline{\mu(x,k,t)},\]
and $\bar\partial^{-1}[s(k)f(x,k,t)]\in L^r(\mathbb C)$.
\label{lem:mu-diff}
\end{lem}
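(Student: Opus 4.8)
The plan is to induct on the weighted order $N=3m+|\alpha|$, differentiating the integral equation for $\mu$ and reducing the existence of each new derivative to an application of the preceding lemma. The base case $N=0$ is the preceding lemma itself. For the inductive step, abbreviate $\mathcal D=\partial_t^m D^\alpha_x$ and apply $\mathcal D$ formally to $\bar\partial_k\mu=e^{i\phi}s(k)\overline\mu$. Since $\bar\partial_k$ commutes with $\mathcal D$ and $s$ is independent of $(x,t)$, isolating the top-order term in which every derivative lands on $\overline\mu$ gives
\[\bar\partial_k(\mathcal D\mu)=e^{i\phi}s(k)\overline{\mathcal D\mu}+s(k)f,\]
where $f=\mathcal D[e^{i\phi}\overline\mu]-e^{i\phi}\mathcal D\overline\mu$ and we used that $\mathcal D$ is a real operator, so $\mathcal D\overline\mu=\overline{\mathcal D\mu}$. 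Applying $\bar\partial_k^{-1}$ and recognizing $T_x$ yields $(I-T_x)(\mathcal D\mu)=\bar\partial_k^{-1}(sf)$, hence the asserted formula $\mathcal D\mu=(I-T_x)^{-1}\bar\partial_k^{-1}(sf)$. The operator $I-T_x$ is invertible on $L^r(\mathbb C)$ by Lemma \ref{lem:Tcompact} together with the Liouville argument in the preceding lemma, so the formula makes sense once we know $\bar\partial_k^{-1}(sf)\in L^r(\mathbb C)$.

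Next I would unwind the structure of $f$. Expanding $\mathcal D[e^{i\phi}\overline\mu]$ by the Leibniz rule, every surviving term of $f$ has at least one derivative falling on $e^{i\phi}$; since each $x$-derivative of $\phi$ contributes a factor linear in $(k,\bar k)$ and each $t$-derivative a factor $k^3+\bar k^3$, such a term equals $e^{i\phi}$ times a polynomial $P(k,\bar k)$ of degree $d$ with $1\le d\le N$, multiplied by a derivative $\overline{\mathcal D'\mu}$ of weighted order $N-d<N$. By the induction hypothesis each such $\mathcal D'\mu$ lies in $1+L^r(\mathbb C)$, so $f$ is built entirely from data controlled at the previous step.

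The heart of the matter is the estimate $\bar\partial_k^{-1}(sf)\in L^r(\mathbb C)$, and this is exactly where $q\in W^{n,p}_\rho$ enters through Lemma \ref{lem:sk-decay}. Each summand of $sf$ is a product $P(k)s(k)\cdot\overline{\mathcal D'\mu}$ with $\deg P=d\le N\le n$. For $|k|$ large, $|P(k)s(k)|\lesssim|k|^n|s(k)|\in L^{r_0}(|k|>k_0)$ for $r_0>\tilde p'$ by Lemma \ref{lem:sk-decay}, while for $|k|$ small $P(k)s(k)\in L^2$ because $P$ is bounded near the origin and $s\in L^2(\mathbb C)$. Pairing these integrabilities with $\overline{\mathcal D'\mu}\in 1+L^r$ by H\"older's inequality places $sf$ in an $L^a$ space on which the mapping properties of $\bar\partial_k^{-1}$ in Lemma \ref{lem:dbar-estimates} return a function in $L^r(\mathbb C)$; the constraint $3m+|\alpha|\le n$ is precisely what keeps the polynomial degree within reach of the decay $|k|^n s\in L^{r_0}$.

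The main obstacle is not the formal computation but justifying that this formula genuinely computes the derivative, i.e.\ that the map $(x,t)\mapsto\mu(x,\cdot,t)\in L^r(\mathbb C)$ is differentiable with $\partial_t^m D^\alpha_x\mu$ equal to $(I-T_x)^{-1}\bar\partial_k^{-1}(sf)$. I would handle this with difference quotients: subtracting the integral equations at nearby parameters and dividing, the difference quotient $Q_h$ of a lowest-order derivative satisfies $(I-T_x)Q_h=(\text{difference quotients of }T_\cdot 1\text{ and of the inhomogeneity})$, and one must show the right-hand side converges in $L^r$ to $\bar\partial_k^{-1}(sf)$ as $h\to0$. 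This needs two uniform controls: that $\|(I-T_{x'})^{-1}\|$ stays bounded for $x'$ near $x$, which follows from continuity of $x'\mapsto T_{x'}$ in operator norm since $|e^{i\phi(x',\cdot)}-e^{i\phi(x,\cdot)}|\lesssim|x'-x|\,|k|$ is absorbed by $|k|s\in L^{r_0}+L^2$; and convergence of the weighted difference quotients of $e^{i\phi}$ tested against $s(k)$ in the relevant norm. Granting these, $Q_h\to(I-T_x)^{-1}\bar\partial_k^{-1}(sf)$ in $L^r$, establishing differentiability, and iterating through the induction on $N$ while bookkeeping the Leibniz terms completes the proof.
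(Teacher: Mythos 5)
Your proposal is correct and follows essentially the same route as the paper: difference quotients applied to the $\bar\partial_k$ equation, identification of the limit via $L^r$-continuity of $T_x$ and $[I-T_x]^{-1}$, and control of $\bar\partial_k^{-1}(sf)$ through the decay $|k|^n s(k)\in L^{r_0}(|k|>k_0)$ from Lemma \ref{lem:sk-decay} combined with H\"older and the Hardy--Littlewood--Sobolev inequality. The only cosmetic difference is that you organize the argument as an explicit induction on $3m+|\alpha|$ with full Leibniz bookkeeping, whereas the paper works out the case $\alpha=(1,0)$, $m=0$ in detail and notes that higher and time derivatives follow identically with extra polynomial factors of $k$.
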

\begin{proof}

We illustrate the case $\alpha=(1,0)$, $m=0$. In the following, let $h\in \mathbb R$ and therefore $x+h=(x_1+h)+ix_2$. The function \[D_h \mu(x+h,k,t)=\frac{\mu(x+ h,k,t)-\mu(x,k,t)}{h}\]
 is in $L^r(\mathbb R^2)$ and satisfies the equation
\[\bar\partial_k D_h\mu(x,k,t)=s(k)\left(\overline{\mu(x+h,k,t)}D_h e^{i\phi}+e^{i\phi}\overline{D_h\mu(x,k,t)}\right).\]
We have $s(k)D_h e^{i\phi}\to i(k+\bar k) s(k)e^{i\phi}\in L^2(\mathbb C)$ because $|k|s(k)\in L^2$, $\mu(x+h,k,t)-1\to \mu(x,k,t)-1\in L^r(\mathbb R^2)$ by continuity, and by the $L^r$ continuity of the operators $T_x$ and $[I-T_x]^{-1}$ we have
\[\frac{\partial}{\partial x_1} \mu(x,k,t)=[I-T_x]^{-1}\bar\partial^{-1}[i(k+\bar k)e^{i\phi}s(k)\overline{\mu(x,k,t)-1}+i(k+\bar k)e^{i\phi}s(k)].\]
Here we have $i(k+\bar k)e^{i\phi}s(k)\in L^{2r/(r+2)}(\mathbb R^2)$ by Lemma \ref{lem:sk-decay}, and by the same lemma, $s(k)\in L^2(\mathbb C)$ so $s(k)(\mu(x,k,t)-1)\in L^{2r/(r+2)}(\mathbb R^2)$. The Hardy-Littlewood-Sobolev inequality then shows 
\[\bar \partial^{-1}[i(k+\bar k)e^{i\phi}s(k)\overline{\mu(x,k,t)-1}+i(k+\bar k)e^{i\phi}s(k)]\in L^r(\mathbb C).\]

Derivatives in $t$ follow in the same manner except with factors of $(k^3+\bar k^3)$ pulled down from the exponential.
\end{proof}

\begin{lem}
Suppose that $q\in W^{n,p}_\rho(\mathbb R^2)$ for $1<p<2$ and $\rho>2/p'$. If $\mu$ solves the equation \eqref{eq:dbar-k-with-t} in $L^r(\mathbb C)$ for some $r> p'$, then $\mu$ admits the large-$k$ expansion
\[\mu(x,k,t)= 1+\sum_{j=1}^{n}\frac{a_j(x,t)}{k^{j}}+ o\left(|k|^{-n}\right)\]
for fixed $x$ and $t$.
Moreover, we may take $\alpha$ spatial derivatives and $m$ time derivatives with $(|\alpha|+3m)\leq n$ to get 
\[\partial_t^m D^{\alpha}_{x}(\mu(x,k,t)-1)=\sum_{j=1}^{n-|\alpha|-3m}\frac{\partial_t^m D^{\alpha}_{x} a_j(x,t)}{k^{j}}+o\left(|k|^{-n+|\alpha|+3m}\right).\]
\label{lem:large-mu-exp}
\end{lem}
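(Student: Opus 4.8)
The plan is to read off the expansion directly from the integral representation $\mu-1=\bar\partial_k^{-1}[e^{i\phi}s(k)\overline{\mu}]$ by expanding the Cauchy kernel of $\bar\partial_k^{-1}$ in a finite geometric series in $1/k$. Writing $g(x,k',t)=e^{i\phi(x,k',t)}s(k')\overline{\mu(x,k',t)}$, so that (up to the normalization of $\bar\partial_k^{-1}$)
\[\mu(x,k,t)-1=\frac{1}{\pi}\int_{\mathbb C}\frac{g(x,k',t)}{k-k'}\,dk',\]
I would substitute the finite expansion $\frac{1}{k-k'}=\sum_{j=1}^n\frac{(k')^{j-1}}{k^j}+\frac{(k')^n}{k^n(k-k')}$. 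This produces the candidate coefficients $a_j(x,t)=\frac{1}{\pi}\int_{\mathbb C}(k')^{j-1}g(x,k',t)\,dk'$ together with a remainder of the form $k^{-n}\bar\partial_k^{-1}[(\,\cdot\,)^n g](k)$.

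Before this makes sense I need two integrability facts about $g$. First, that $\mu(x,\cdot,t)$ is bounded with $\mu-1\to 0$ as $|k|\to\infty$: this follows by applying Lemma \ref{lem:dbar-estimates}(3) to the integral equation once one checks the inhomogeneity $e^{i\phi}s\overline{\mu}$ lies in $L^{p_0}\cap L^{q_0}$ for some $1<p_0<2<q_0$, which uses $s\in L^2(\mathbb C)$ (Lemma \ref{lem:sk-decay}) together with $\mu-1\in L^r$. Second, and crucially, the weighted integrability of $s$: Lemma \ref{lem:sk-decay} gives $|k|^n s(k)\in L^\sigma(|k|>k_0)$ for every $\sigma>\tilde p'$, while near the origin Theorem \ref{thm:tk-small} shows $s(k)\sim(\,|k|\log|k|\,)^{-1}$, which is locally integrable against any polynomial weight. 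Combining these, $|k|^{j-1}g\in L^1(\mathbb C)$ for every $j\le n$ (so the moments $a_j$ converge) and $|k|^n g\in L^{p_0}\cap L^{q_0}(\mathbb C)$ for suitable $1<p_0<2<q_0$. The latter lets me apply Lemma \ref{lem:dbar-estimates}(3) to the remainder, giving $\bar\partial_k^{-1}[(\,\cdot\,)^n g]\to 0$ at infinity and hence remainder $=o(|k|^{-n})$. This establishes the first displayed expansion.

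For the derivatives I would differentiate the $\bar\partial_k$ equation: $\bar\partial_k[\partial_t^m D^\alpha_x\mu]=\partial_t^m D^\alpha_x[e^{i\phi}s\overline{\mu}]$. By the Leibniz rule the right side is a sum of terms $(\partial_t^{m_1}D^{\alpha_1}e^{i\phi})\,s\,\overline{\partial_t^{m_2}D^{\alpha_2}\mu}$; since each $x$-derivative of $e^{i\phi}$ brings down a factor linear in $k$ and each $t$-derivative a factor cubic in $k$, the top term carries a polynomial in $k$ of degree exactly $|\alpha|+3m$ multiplying $s$. Feeding this into the representation from Lemma \ref{lem:mu-diff} and repeating the geometric-kernel expansion, the available decay budget $|k|^n s\in L^\sigma$ is reduced by $|\alpha|+3m$ powers; this is precisely why the constraint $|\alpha|+3m\le n$ appears and why the expansion now runs only to order $n-|\alpha|-3m$. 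Finally I would identify the coefficients by differentiating the moment formula under the integral sign, i.e. $\partial_t^m D^\alpha_x a_j=\frac{1}{\pi}\int(k')^{j-1}\partial_t^m D^\alpha_x g\,dk'$, which is justified by the differentiability of $\mu$ in $(x,t)$ with $L^r$-valued derivatives (Lemma \ref{lem:mu-diff}) and dominated convergence using the same weighted bounds on $s$.

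The main obstacle is bookkeeping the integrability budget: one must verify that the single gain $|k|^n s\in L^\sigma$ from Lemma \ref{lem:sk-decay} simultaneously supplies convergence of all moments up to order $n-1$ and the $L^{p_0}\cap L^{q_0}$ control of $|k|^n g$ needed for the remainder to vanish, while the mild singularity of $s$ at $k=0$ from Theorem \ref{thm:tk-small} must be checked harmless against the polynomial weights. The bootstrap for $\mu\in L^\infty$ and the dominated-convergence justification for differentiating the coefficients under the integral are the two places where care is most needed; everything else is routine.
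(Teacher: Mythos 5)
Your overall route is the same as the paper's: the Cauchy-kernel representation of $\mu-1$, the finite geometric expansion of $(k-\kappa)^{-1}$ giving the moments $a_j$ and the remainder $k^{-n}\bar\partial_k^{-1}[(\,\cdot\,)^n g]$, decay of the remainder via Lemma \ref{lem:dbar-estimates}(3) together with the weighted bound $|k|^n s\in L^{r_1}(|k|>k_0)$ from Lemma \ref{lem:sk-decay}, and derivatives handled through Lemma \ref{lem:mu-diff} with the $1$-per-$x$-derivative, $3$-per-$t$-derivative bookkeeping that forces the truncation at order $n-|\alpha|-3m$. However, there is one concrete step that fails as stated: your ``first integrability fact,'' namely that the inhomogeneity $g=e^{i\phi}s\overline{\mu}$ lies in $L^{p_0}\cap L^{q_0}(\mathbb C)$ for some $q_0>2$, so that Lemma \ref{lem:dbar-estimates}(3) gives $\mu\in L^\infty$ and $\mu-1\to 0$. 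In the subcritical case --- the whole point of this paper --- Theorem \ref{thm:tk-small} gives $|s(k)|\sim \bigl(2|k|\,\bigl|\log|k|\bigr|\bigr)^{-1}$ near $k=0$, which is in $L^2$ but in \emph{no} $L^{q}$ with $q>2$ on any neighborhood of the origin, and $\mu$ does not vanish there; so $g\notin L^{q_0}$ near $k=0$ for any $q_0>2$, and Lemma \ref{lem:dbar-estimates}(3) cannot be applied globally. Consequently the boundedness of $\mu$ in $k$ is not established by this argument, and the later assertion ``combining these, $|k|^{n}g\in L^{p_0}\cap L^{q_0}(\mathbb C)$'' inherits the problem, since it leans on $|g|\lesssim|s|$.

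The gap is repairable, and the repair is exactly what the paper does: drop any claim of boundedness of $\mu$ (it is never needed), write $\overline{\mu}=1+(\overline{\mu}-1)$ with $\mu-1\in L^r$ and use H\"older against the weighted bounds on $s$, and split the remainder integral into $\Omega_1=\{|\kappa|\le 1\}$ and $\Omega_2=\mathbb C\setminus\Omega_1$. On $\Omega_1$ one does not invoke Lemma \ref{lem:dbar-estimates}(3) at all: for $|k|\ge 2$ the kernel satisfies $|k-\kappa|\ge|k|/2$, and $s\overline{\mu}\in L^1(\Omega_1)$ (Cauchy--Schwarz with $s\in L^2$, $\mu\in L^2(\Omega_1)$), so this piece is $O(|k|^{-n-1})$ directly. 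Only on $\Omega_2$, where $|\kappa|^n s\in L^{r_1}$ for all $r_1>\tilde p'$, does one check $\kappa^n g\in L^{p_1}\cap L^{p_2}$ with $p_1<2<p_2$ and apply Lemma \ref{lem:dbar-estimates}(3). (Your alternative observation --- that for $n\ge 1$ the weight $|\kappa|^n$ tames the origin singularity since $|\kappa|^n s$ is bounded near $0$ --- also works for the remainder and the moments, but it does not rescue the $\mu\in L^\infty$ claim, which carries no such weight.) The derivative part of your argument is fine in outline and matches the paper's difference-quotient proof; just make sure the ``dominated convergence'' step is run through the $L^r$-convergence of difference quotients from Lemma \ref{lem:mu-diff} paired against $\kappa^{j-1}s$ and its $k$-polynomial multiples in the dual space, rather than through any pointwise domination involving $\|\mu\|_{L^\infty}$.
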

\begin{proof}
Note that
\[\mu(x,k,t)=1+\frac{1}{\pi}\int_{\mathbb C}\frac{1}{k-\kappa}e^{i\phi(x,k,t)}s(\kappa)\overline{\mu(x,\kappa,t)}d\kappa.\]
Expanding $(k-\kappa)^{-1}$ as a geometric series, we have
\[
a_j(x,t)=\frac{1}{\pi}\int_{\mathbb C} \kappa^{j-1} e^{i\phi(x,\kappa,t)}s(\kappa)\overline{\mu(x,\kappa,t)}d\kappa
\]
with  remainder
\[
R_n(x,k,t)=\frac{1}{\pi}k^{-n}\int \frac{1}{k-\kappa}\kappa^{n}e^{i\phi(x,\kappa,t)}s(\kappa)\overline{\mu(x,\kappa,t)}d\kappa.
\]
We look at $\Omega_1=\{\kappa:|\kappa|\leq 1\}$ and $\Omega_2=\mathbb C\setminus \Omega_1$ separately. In $\Omega_1$, $s(k)$ is in $L^2(\mathbb C)$, and with $\mu(x,\cdot,t)\in L^r(\Omega_1)$  for $p'<r<\infty$, the integral over this region decreases like $|k|^{-1}$ for all $n$. 

In $\Omega_2$, we use Lemma \ref{lem:dbar-estimates}. We have that $k^ns(k)$ is in $L^{r_1}(|k|>1)$ for all $r_1>\tilde p'$ by Lemma \ref{lem:sk-decay}. 
We also have $D^{\alpha}(\mu(x,\cdot,t)-1)\in L^{r}(\mathbb C)$ for $p'<r<\infty$. 
Combining these two results shows that the product is in $L^{p_1}\cap L^{p_2}(\mathbb C)$ for $\tilde p'r/(\tilde p'+r)<p_1<2<p_2\leq r$, so $R_n(x,k,t)=o(|k|^{-n})$.

To show decay of the derivatives, we look at the difference quotients. As in Lemma \ref{lem:mu-diff} we will take derivatives with respect to $x_1$. Let $h$ be a real number, then we have
\begin{align*}
D_h a_j(x,t)=&\frac{1}{\pi}\int_{\mathbb C} \kappa^{j-1} e^{i\phi(x,\kappa,t)}\frac{e^{i(\kappa h+\bar \kappa h)}-1}{h}s(\kappa)\overline{\mu(x+h,\kappa,t)}d\kappa\\
&+\frac{1}{\pi}\int_{\mathbb C} \kappa^{j-1} e^{i\phi(x,\kappa,t)}s(\kappa)\overline{D_h\mu(x,\kappa,t)}d\kappa\\
&=I+II.
\end{align*}
From Lemma \ref{lem:mu-diff} we have $\mu(x+h,\cdot,t)\to \mu(x,\cdot,t)\in L^r(\mathbb C)$ and $D_h \mu(x,\cdot,t)\to \partial_{x_1} \mu(x,\cdot,t)\in L^r(\mathbb C)$. Also, by Lemma \ref{lem:sk-decay} 
\[\kappa^{j-1} s(\kappa)\frac{e^{i(\kappa h +\bar \kappa h)}-1}{h} \to \kappa^{j-1}i(\kappa+\bar \kappa) s(k)\in L^{r'}\]
when $j\leq n$. Altogether then, we have convergence of the partial derivative. 

Using the same methods, we get may get the derivatives for $R_n(x,k,t)$, but because we get extra factors of $\kappa$ ($3$ for every time derivative, $1$ for every space derivative) we may only take the expansion to order $(n-|\alpha|-3m)$ so that the derivatives of $R_{n-|\alpha|-3m}(x,k,t)$ will converge.
\end{proof}

\begin{proof}[Proof of Theorem \ref{thm:inv-scat}]
The solutions $\mu(x,k)$ from the equations \eqref{eq:dbar-k-mu} and \eqref{eq:dbar-z} are the same, so we may plug-in the large $k$ expansion of $\mu$ into equation \eqref{eq:dbar-z}. By our assumptions on $q$ and Lemma \ref{lem:large-mu-exp} we may take $x$ derivatives of $\mu(x,\cdot)-1\in L^{r}(\mathbb C)$.  The large-$k$ expansion of $\mu$ gives us $|\bar\partial\partial \mu((x,k)|=o(1)$ as $|k|\to\infty$ since $q$ has two derivatives. Thus, using equation \eqref{eq:dbar-z} and the formula for $a_1(x)$, we write
\begin{align*}
q(x)\mu(x,k)=&4\bar\partial(\partial+ik) \mu(x,k) \\
=&\frac{4i}{\pi}\bar\partial\int_{\mathbb C} e_{-\kappa}(x)s(\kappa)\overline{\mu(x,\kappa)}\,d\kappa+o(1).
\end{align*}
Taking the limit as $|k|\to \infty$, and using the fact that $\mu(x,k)\to 1$ point-wise as $|k|\to \infty$, we have
\[q(x)= \frac{4i}{\pi}\bar\partial\int_{\mathbb C} e_{-\kappa}(x)s(\kappa)\overline{\mu(x,\kappa)}\,d\kappa.\]
\end{proof}

\bibliographystyle{plain}

\begin{thebibliography}{10}

\bibitem{astala}
K.~Astala, T.~Iwaniec, and G.~Martin.
\newblock {\em Elliptic partial differential equations and quasi-conformal
  mappings in the plane}, volume~48 of {\em Princeton Mathematical Series}.
\newblock Princeton University Press, Princeton, NJ, 2009.

\bibitem{beals}
R.~Beals and R.~R. Coifman.
\newblock Linear spectral problems, nonlinear equations and the
  {$\overline\partial$}-method.
\newblock {\em Inverse Problems}, 5(2):87--130, 1989.

\bibitem{boitietal}
M.~Boiti, J.~Leon, M.~Manna, and F.~Pempinelli.
\newblock On a spectral transform of a {KDV-like} equation related to the
  {Schrodinger} operator in the plane.
\newblock {\em Inverse Problems}, 3(1):25, 1987.

\bibitem{brown_uhlmann}
R.~Brown and G.~Uhlmann.
\newblock Uniqueness in the inverse conductivity problem for nonsmooth
  conductivities in two dimensions.
\newblock {\em Communications in Partial Differential Equations},
  22(5-6):1009--1027, 1997.

\bibitem{croke}
R.~Croke, J.~Mueller, M.~Music, P.~Perry, S.~Siltanen, and A.~Stahel.
\newblock The {Novikov--Veselov} equation: Theory and computation.
\newblock {\em Inverse Problems}, 2013.

\bibitem{faddeev}
L.~D. Faddeev.
\newblock Increasing solutions of the {Schr\"odinger} equation.
\newblock {\em Soviet Physics Doklady}, volume~10:1033--1035, 1966.

\bibitem{grinevich-novikov}
P.~G. Grinevich and R.~G. Novikov.
\newblock Faddeev eigenfunctions for point potentials in two dimensions.
\newblock {\em Phys. Lett. A}, 376(12-13):1102--1106, 2012.

\bibitem{lms}
M.~Lassas, J.~L. Mueller, and S.~Siltanen.
\newblock Mapping properties of the nonlinear Fourier transform in dimension
  two.
\newblock {\em Communications in Partial Differential Equations},
  32(4):591--610, 2007.

\bibitem{lmss}
M.~Lassas, J.L. Mueller, S.~Siltanen, and A.~Stahel.
\newblock The {Novikov--Veselov} equation and the inverse scattering method,
  {Part I}: Analysis.
\newblock {\em Physica D: Nonlinear Phenomena}, 241(16):1322 -- 1335, 2012.

\bibitem{murata}
M.~Murata.
\newblock Structure of positive solutions to {$(-\Delta+V)u=0$} in {${\bf
  R}^n$}.
\newblock {\em Duke Math. J.}, 53(4):869--943, 1986.

\bibitem{exceptional_circle}
M.~Music, P.~Perry, and S.~Siltanen.
\newblock Exceptional circles of radial potentials.
\newblock {\em Inverse Problems}, 29(4):045004, 25, 2013.

\bibitem{nachmanpreprint}
A.~I. Nachman.
\newblock Global uniqueness for a two-dimensional inverse boundary value
  problem.
\newblock {\em University of Rochester, Dept. of Mathematics Preprint Series},
  19, 1993.

\bibitem{nachman}
A.~I. Nachman.
\newblock Global uniqueness for a two-dimensional inverse boundary value
  problem.
\newblock {\em Ann. of Math. (2)}, 143(1):71--96, 1996.

\bibitem{perry}
P.~Perry.
\newblock Miura maps and inverse scattering for the {Novikov Veselov} equation.
\newblock {\em Preprint}, 2013.
\newblock arxiv:1201.2385, submitted to {\em Analysis and Partial Differential
  Equations}.

\bibitem{siltanen}
S.~Siltanen.
\newblock Electrical impedance tomography and {Faddeev}'s {Green} functions.
\newblock {\em Ann. Acad. Sci. Fenn. Mathematica Dissertationes}, 121, 1999.

\bibitem{tsai2}
T.-Y. Tsai.
\newblock The associated evolution equations of the Sch\"odinger operator in the plane.
\newblock {em Inverse Problems}, 10 (1994), no. 6, 1419-1432.

\bibitem{tsai}
T.-Y. Tsai.
\newblock The {S}chr\"odinger operator in the plane.
\newblock {\em Inverse Problems}, 9(6):763--787, 1993.

\end{thebibliography}



\end{document}